\documentclass{amsart}
\usepackage{amsmath,amssymb,amsxtra,anysize,adjustbox,xcolor,combelow,hyperref}
\usepackage{framed}

\usepackage{tikz-cd,comment}
\usetikzlibrary{calc, babel}%
\usetikzlibrary{shapes}%
\usetikzlibrary{patterns}%
\usetikzlibrary{positioning}%
\usetikzlibrary{arrows.meta}
\usetikzlibrary{knots}
\usetikzlibrary{decorations.markings}
\usetikzlibrary{fit}
\usetikzlibrary{hobby}
\tikzset{curve/.style={settings={#1},to path={(\tikztostart)
    .. controls ($(\tikztostart)!\pv{pos}!(\tikztotarget)!\pv{height}!270:(\tikztotarget)$)
    and ($(\tikztostart)!1-\pv{pos}!(\tikztotarget)!\pv{height}!270:(\tikztotarget)$)
    .. (\tikztotarget)\tikztonodes}},
    settings/.code={\tikzset{quiver/.cd,#1}
        \def\pv##1{\pgfkeysvalueof{/tikz/quiver/##1}}},
    quiver/.cd,pos/.initial=0.35,height/.initial=0}

\newcommand{\seed}{\Sigma}
\newcommand{\excluster}[1]{\widetilde{\mathbf{#1}}}

\newcommand{\mut}{\mathrm{mut}}
\newcommand{\exmatrix}[1]{\widetilde{#1}}
\newcommand{\var}{\mathcal{A}}
\newcommand{\deep}{\mathcal{D}}
\newcommand{\manifold}{\mathcal{M}}
\newcommand{\stab}{\mathcal{S}}
\newcommand{\Stab}{\mathrm{Stab}}

\newcommand{\C}{\mathbb{C}}
\newcommand{\Z}{\mathbb{Z}}

\newcommand{\Spec}{\mathrm{Spec}}

\newcommand{\seeds}{\mathsf{Seeds}}

\usepackage{enumitem}

\usepackage{mathtools}

\numberwithin{equation}{section}

\newtheorem{theorem}{Theorem}[section]
\newtheorem{proposition}[theorem]{Proposition}
\newtheorem{corollary}[theorem]{Corollary}
\newtheorem{lemma}[theorem]{Lemma}

\theoremstyle{definition}
\newtheorem{remark}[theorem]{Remark}

\newtheorem{definition}[theorem]{Definition}
\newtheorem{example}[theorem]{Example}

\newcommand{\dil}{\mathrm{Dil}}

\newcommand{\ambient}{\mathcal{F}}

\title{Mysterious points in keys but not trees}
\author{Scott Neville}
\address{Laboratoire d'Algèbre, de Combinatoire et d’informatique Mathématique (LACIM)\\ Université du Québec à Montréal\\ Montréal, Québec, Canada }
\email{nevilles@umich.edu}

\author{Jos\'e Simental}
\address{Instituto de Matemáticas, Universidad Nacional Autónoma de México \\ Ciudad Universitaria, CDMX, México }
\email{simental@im.unam.mx}

\keywords{Cluster algebras, deep points, mysterious points, quivers, forks, cluster varieties}
\subjclass[2020]{13F60, 05C20}

\begin{document}

\begin{abstract}
The deep locus of a cluster variety is defined to be the set of its points that do not belong to any cluster torus. We show that, if the cluster variety has a seed whose mutable part is a tree without multiple edges, then the deep locus can be characterized as the set of points whose stabilizer under a certain group action is nontrivial. Deep points without a stabilizer are called mysterious. We establish that many other classes of acyclic quivers (including keys) often have mysterious points. This refutes \cite[Conjecture 1.1]{deep}, but establishes it in many important cases.
\end{abstract}

\maketitle

\section{Introduction}

An affine cluster variety $\var$ is the affine scheme $\var = \Spec(A)$ associated to a cluster algebra $A$ in the sense of Fomin--Zelevinsky \cite{FZ1}. Recall that the algebra $A$ has a distinguished set of generators which can be grouped into finite sets $\excluster{x}(\seed)$, where $\seed$ runs over the (typically infinite) set of \emph{seeds} of $A$. Each of the finite sets $\excluster{x}(\seed)$ has the same cardinality $r$ and, by the Laurent phenomenon \cite{LP}, induces an open embedding
\[
(\C^{\times})^{r} \hookrightarrow \var. 
\]
We call the image of this embedding a \emph{cluster torus}, denoted by $T(\seed)$. It is tempting to hope that $\var$ can be covered by the cluster tori. However, this is not always true. The \emph{deep locus} of $\var$ is  precisely the set of points that are not covered by the cluster tori. 

Note that $p \in \var$ is deep if and only if for every cluster $\excluster{x}(\seed)$ there exists some element $x \in \excluster{x}(\seed)$ such that $x(p) = 0$. If the set of seeds is infinite, verfying whether a point is deep can be a hard problem. A complete description of the deep points of a cluster variety exists in the following settings.

\begin{enumerate}
    \item Cluster algebras of simply-laced finite cluster type, \cite{deep} (see also \cite{beyer-muller} for type $A$)
    \item Cluster algebras associated to unpunctured surfaces, \cite{beyer-muller}.
    \item Rank $2$ cluster algebras \cite{beyer-muller}.
    \item The Markov cluster algebra \cite{beyer-muller, deep}.
    \item The cluster algebra associated to a family of positroid cells in $\mathrm{Gr}(3,n)$, including the top-dimensional one \cite{deep}. 
\end{enumerate}

All of these are obtained by \emph{ad-hoc} methods, special to the cluster algebras in question. 

There is an easy condition that guarantees a point is deep. The \emph{cluster dilation group} $\dil(\var)$ is a group that acts by dilations, hence freely, in every cluster torus.\footnote{This is usually called the \emph{cluster automorphism group} in the literature. See Remark \ref{rmk:terminology} for our reasons to not use this name.} So a sufficient condition for a point to be deep is that it has a nontrivial stabilizer under the action of $\dil(\var)$. This condition is not always necessary (see \cite[Proposition 3.22]{deep}). Nevertheless, in \cite[Conjecture 1.1]{deep} we conjecture that it is necessary for a large class of cluster algebras, namely, the locally acyclic ones. In this note we will establish this conjecture when $\var$ has a seed whose mutable part is a tree. However, we will also see that it fails to hold in general, even for cluster algebras that have an acyclic seed.

More precisely, following \cite{deep} let us 
say that a $p \in \var$ is \emph{mysterious} if $p$ is deep but $\dil(\var)$ acts freely on $p$. The first main result of this paper reads as follows. 

\begin{theorem}\label{thm:main-intro}
Assume $\var$ has a seed $\seed = (\exmatrix{B}(Q), \excluster{x})$ such that the mutable part of $Q$ is a tree. Then, $\var$ has no mysterious points.     
\end{theorem}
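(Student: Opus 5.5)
We argue by contrapositive: if $p \in \var$ is deep, then $p$ has a nontrivial stabilizer in $\dil(\var)$. The proof is an induction on the number $n$ of mutable vertices of the tree $Q$. We only use mutations at leaves and at their neighbours, so that every seed we meet stays a small perturbation of $\seed$.

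\textbf{Setup.} Write $x_1,\dots,x_n$ for the mutable variables and $y$ for the frozen ones. For each mutable $k$ the exchange relation reads
\[
x_k x_k' = M_k^+ + M_k^- ,
\]
where $M_k^\pm$ are monomials in the neighbours of $k$ in $Q$ together with frozen variables. Recall that $\dil(\var)$ consists of the torus of weight vectors $\lambda$ that make every exchange binomial homogeneous. Hence $p$ has a nontrivial stabilizer exactly when some nontrivial such $\lambda$ fixes $p$. Since $\dil$ acts on each coordinate by a character, this means: there is a nontrivial $\lambda$ with $\lambda(z)=1$ for every cluster and frozen variable $z$ in some fixed cluster with $z(p)\neq 0$. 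Points in a cluster torus are never deep, so it suffices to work in coordinates.

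\textbf{Reduction at a leaf.} Let $\ell$ be a leaf of $Q$ and $m$ its unique mutable neighbour.
\begin{itemize}
\item Suppose $x_\ell(p)\neq 0$. We localize at $x_\ell$, which is legitimate because $\var$ is locally acyclic and covered by such localizations (Muller). Then $\{x_\ell \neq 0\}$ is the cluster variety of the quiver obtained by freezing $\ell$, whose mutable part is the tree $Q\setminus\{\ell\}$. By induction, $p$ is deep there, and hence has a nontrivial stabilizer under the smaller dilation group.
\item Suppose instead $x_\ell(p)=0$. We replace $p$'s coordinates using $\mu_\ell$: then $x_\ell'(p)=0$ as well is needed for deepness, which forces $M_\ell^+(p)+M_\ell^-(p)=0$ from the exchange relation. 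Since $M_\ell^\pm$ involve only $x_m$ and frozen variables, this is a single binomial condition.
\end{itemize}

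\textbf{Main obstacle: extending stabilizers from the frozen quiver.} The hard step is to show that stabilizers for the frozen quiver extend to stabilizers for $\var$. Freezing $\ell$ enlarges the dilation group, so a stabilizer obtained by induction might not be homogeneous for the $\ell$-exchange relation. We plan to use the tree hypothesis here. Because $Q$ has no cycles and no multiple edges, the homogeneity conditions form a triangular system along the tree. We would root the tree and solve for weights leaf by leaf: each exchange relation adds one linear condition, which determines the weight of one new variable.

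We would then check that the weights can be chosen to be trivial on every variable that is nonzero at $p$. If some variable is zero at $p$ in every cluster, that zero pattern propagates along the tree via the binomial conditions $M_k^+(p)+M_k^-(p)=0$. The propagation should force a relation among the frozen coordinates, and the ratio $M_k^+/M_k^-=-1$ then gives a root-of-unity or torus direction. That direction is precisely the required $\lambda$.

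\textbf{Fallback.} If the inductive bookkeeping fails, we would instead give a direct classification. We would compute the deep locus of a tree-type cluster variety explicitly, as the locus where the exchange binomials along some connected subtree vanish alternately. We would then exhibit the stabilizer directly, using a dilation supported on that subtree with alternating signs. Simple edges are what make the alternating sign assignment $\pm1$ consistent.
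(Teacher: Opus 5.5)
There is a genuine gap, and it sits in the step you flag as the main obstacle. As you formulate it, that step cannot be closed. Induction only tells you that $p$ is deep in the frozen variety $\widetilde{\var}=\{x_\ell\neq 0\}$. A nontrivial element of $\Stab_{\dil(\widetilde{\var})}(p)$ need not yield one in $\Stab_{\dil(\var)}(p)$ unless you also use that $p$ avoids the tori of seeds obtained by mutating at $\ell$, and your plan never uses this.

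Here is an example. Take $Q=1\to 2$ with no frozen vertices, so $\var=\{p_1p_1'=1+p_2,\ p_2p_2'=1+p_1\}$. Take the leaf $\ell=1$ and the point $p=(p_1,p_1',p_2,p_2')=(-1,-1,0,0)$.
\begin{itemize}
\item After freezing $1$, the only clusters are $\{x_1,x_2\}$ and $\{x_1,x_2'\}$. So $p$ is deep in $\widetilde{\var}$, and its stabilizer is all of $\dil(\widetilde{\var})=\{(1,t_2)\}\cong\C^\times$.
\item However, $\dil(\var)$ is trivial, and $p$ is not deep. It lies in $T(\mu_2\mu_1(\seed))$, whose cluster $\{x_1',(x_2'+1)/x_1\}$ takes the value $(-1,-1)$ at $p$.
\end{itemize}

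Your proposed repair does not address this.
\begin{itemize}
\item The stabilizer of a point is governed by the set of $i$ with $x_i(p)=x_i'(p)=0$, together with the multiplicative homogeneity equations at their neighbours. It is not governed by the value $-1$ of $M_k^+/M_k^-$, and ``the zero pattern should propagate'' is not an argument.
\item The branch where $x_\ell(p)=x_\ell'(p)=0$ for the leaves is left with no reduction at all.
\item The fallback simply restates the theorem as an unproved description of $\deep(\seed)$.
\item Simple edges are not what makes stabilizers sign-consistent. The quiver $3\xrightarrow{2}1\xleftarrow{3}2$, a path with multiple edges, does have mysterious points. Simple edges matter instead in the exchange-relation computations below.
\end{itemize}

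The missing idea is to freeze only at a vertex $k$ with $x_k(p)\neq 0$ and $x_j(p)\neq 0$ for every neighbour $j$. Then $t_j=1$ on the stabilizer, the homogeneity equation at $k$ holds automatically, and $\Stab_{\dil(\widetilde{\var})}(p)=\Stab_{\dil(\var)}(p)$. The paper therefore argues in the forward direction: trivial stabilizer implies $p$ lies in some torus.

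After reducing to a sink/source orientation with one frozen vertex per mutable vertex, such a $k$ exists unless $p\in\mathcal{O}_I$ with $I$ a maximal independent set. In that case one may assume $x_i'(p)=0$ for all $i\in I$, and there are three possibilities.
\begin{itemize}
\item If some leaf $j\notin I$ exists, mutate at $j$ and then at its neighbour $i$. This gives $x_i''=(x_i'+x_{\overline{j}})/x_j$, which is nonzero at $p$, so $i$ can be frozen. This is exactly what rescues the example above.
\item If all leaves lie in $I$ and $I$ is all sinks or all sources, one writes down a nontrivial stabilizer directly.
\item Otherwise, take an adjacent pair $j_0\leftarrow j_1$ outside $I$ farthest from a root. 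Mutating at $j_0$ and then at $i_0$ produces a freezable vertex, or else one again exhibits a nontrivial stabilizer.
\end{itemize}
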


\begin{remark}
Following convention, trees do not have parallel arrows. So if there is more than one arrow between a pair of mutable vertices then $Q$ does \emph{not} satisfy the conditions in Theorem \ref{thm:main-intro}.
\end{remark}

Note that Theorem \ref{thm:main-intro} implies that cluster algebras of (simply-laced) finite cluster type have no mysterious points, thus recovering \cite[Theorem 5.19]{deep} using more elementary methods.

Let us explain our proof strategy. Assume first that $\var$ is any cluster variety of skew-symmetric type. Assume $p \in \var$ has a trivial stabilizer under the action of the cluster dilation group $\dil(\var)$. We need to find a cluster torus containing $p$. Fix a seed $\seed = (\exmatrix{B}(Q), \excluster{x})$, and assume that there exists a mutable vertex $i$ such that $x_i(p) \neq 0$. Consider the seed $\widetilde{\seed}$ obtained from $\seed$ by freezing the vertex $i$, and assume that the cluster variety $\widetilde{\var}$ coincides with the locus $\{x_i \neq 0\} \subseteq \var$. Then, we may consider $p \in \widetilde{\var}$ and work inductively on the number of mutable vertices. However, we have a natural embedding $\dil(\var) \subseteq \dil(\widetilde{\var})$, and it may happen that $\Stab_{\dil(\widetilde{\var})}(p)$ is nontrivial even if $\Stab_{\dil(\var)}(p)$ is (see Example \ref{ex:mysterious-point-intro} below for an instance of this). One sufficient condition that guarantees that $\Stab_{\dil(\widetilde{\var})}(p)$ is trivial is that $x_j(p) \neq 0$ for every vertex $j$ neighboring $i$ in $Q$. 

This suggests we should work with \emph{acyclic} seeds, and use the stratification of $\var$ studied by Lam and Speyer in \cite{LSII}. 
This stratification is indexed by the independent sets of vertices of the mutable part of~$Q$.
Using the strategy outlined above, we may reduce to the case where $p$ is in a stratum indexed by a maximal independent set.
Up to mutation equivalence, we may assume that every vertex is either a source or a sink, in particular, one-step mutations of $Q$ are still trees and we can perform a well-chosen sequence of mutations to take care of the case of maximally independent sets.

Outside of trees, the inductive strategy outlined above  may fail. Analyzing where it may fail we are led to a family of mysterious points.

\begin{theorem}\label{thm:main2-intro}
Consider the quiver
\[\begin{tikzcd}
	3 && 1 && 2
	\arrow["a", from=1-1, to=1-3] 
	\arrow["b"', from=1-5, to=1-3] 
\end{tikzcd}\]
Assume that $\gcd(a,b)=1$, $\min(a,b) \geq 2$. Then, the corresponding cluster algebra has mysterious points. 
\end{theorem}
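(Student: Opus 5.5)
The plan is to write down an explicit candidate point, check that the dilation group acts freely on it, and then prove that it lies in no cluster torus. Since the seed is acyclic, I will use the Berenstein--Fomin--Zelevinsky presentation of $A$. It is generated by $x_1,x_2,x_3,x_1',x_2',x_3'$, subject to the exchange relations
\[
x_1x_1' = x_2^b x_3^a + 1, \qquad x_2x_2' = 1 + x_1^b, \qquad x_3x_3' = 1+x_1^a .
\]
A dilation $(t_1,t_2,t_3)$ preserves these relations exactly when $t_2^bt_3^a=1$, $t_1^b=1$ and $t_1^a=1$. Since $\gcd(a,b)=1$, this forces $t_1=1$, so $\dil(\var)$ is the connected one-dimensional torus $\{t_2^bt_3^a=1\}$ acting on the second and third coordinates.

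The candidate is the point $p$ with
\[
x_1(p)=x_1'(p)=0, \qquad x_2(p)^b x_3(p)^a=-1 .
\]
The relations force $x_2(p),x_3(p),x_2'(p),x_3'(p)\neq 0$. Because $x_2$ and $x_3$ are nonzero at $p$, any element of the stabilizer has $t_2=t_3=1$, so $\dil(\var)$ acts freely on $p$. The point also avoids the two obvious tori. The initial torus $T(\seed)$ requires $x_1\neq 0$, and the torus of $\mu_1(\seed)$ requires $x_1'\neq 0$. Moreover $\mu_2(\seed)$ and $\mu_3(\seed)$ still contain $x_1$. Everything that remains is to show deepness against all (infinitely many) seeds.

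For deepness I would look for a mutation-invariant property of seeds, proved by induction on the distance from $\seed$. A natural candidate is the following: every seed $\Sigma'$ has a mutable vertex $k$ with $x_k(p)=0$, such that the exchange binomial at $k$ also vanishes at $p$ in the relevant sense. That is, both monomials of the exchange relation at $k$ are nonzero at $p$ and cancel, exactly as $x_2^bx_3^a+1$ does at $p$, so that the new variable $x_k'$ vanishes as well.

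Heuristically, near $p$ the locus looks like the rank-one algebra obtained by freezing $2$ and $3$. There the extra dilation $t_1\in\C^\times$ scales $x_1,x_1'$ and fixes $p$. The induction should make this local stabilizer propagate through mutations even though it does not extend to a global element of $\dil(\var)$. Mutations at vertices other than the ``vanishing'' vertex change the quiver: mutation at $1$, for instance, produces a $3$-cycle with an arrow of weight $ab$ between $2$ and $3$. In the induction step I would track the arrow multiplicities modulo the structure forced by $\gcd(a,b)=1$ and $\min(a,b)\ge 2$. The point is that $\min(a,b)\ge2$ ensures exchange binomials like $1+x_1^b$ never acquire a linear term in the vanishing variable that would let a nonvanishing cluster variable appear.

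The main obstacle is precisely this invariant, since the mutation class of the quiver is infinite and not of finite mutation type. If a direct combinatorial invariant proves unwieldy, my first fallback is algebraic. I would find an element or ideal $I\subseteq A$ with $p\in V(I)$ such that $I$ contains some cluster variable of every cluster. A natural choice is to work in the quotient $A/(x_1,x_1')$ and show that every cluster monomial maps to a zero divisor or to zero there. This would be checked via the grading by the rank-one dilation $t_1$, which becomes well defined after inverting $x_2,x_3$ and reducing modulo $(x_1^{\min(a,b)})$.
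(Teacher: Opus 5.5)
Your setup matches the paper: the presentation, $\dil = \{t_1 = 1,\ t_2^bt_3^a = 1\}$, the locus $\{x_1 = x_1' = 0\}$ (nonempty via $x_2^bx_3^a = -1$), and the trivial stabilizer are exactly what the paper uses. One slip: vertex $1$ is a sink, so $\mu_1(Q)$ is still acyclic; the weight-$ab$ arrow and the $3$-cycle first appear after two mutations, e.g.\ in $\mu_1\mu_2(Q)$.

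The gap is deepness, which is the whole content of the theorem and which you leave unproved. Your invariant is the right one: in fact, in every seed $x_1$ vanishes at $p$ and no other cluster variable does. But the reason you give for it is a non sequitur. If $x_k(p)=0$, then $(M_1+M_2)(p)=0$ is forced by $x_kx_k' = M_1+M_2$, and it says nothing about $x_k'(p)$. In the initial seed, $x_1'(p)=0$ is an assumption. Showing that this zero persists when you return to vertex $1$ after mutating elsewhere is exactly the hard step.

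Your fallback does not reach that step either. Since $\gcd(a,b)=1$, the ring $A/(x_1,x_1') \cong \C[x_2^{\pm1},x_3^{\pm1}]/(x_2^bx_3^a+1)$ is a domain. So ``zero divisor or zero'' just means zero, and the claim merely restates deepness. The $t_1$-grading modulo $x_1^{\min(a,b)}$ also fails after two mutations. Say $b<a$: the new variable of $\mu_1\mu_2(\seed)$ is $x_2^{-b}\bigl(x_1' + b\,x_1^{b-1}+\cdots\bigr)$, which modulo $x_1^b$ still mixes $t_1$-degrees $-1$ and $b-1$.

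The paper supplies two ingredients you are missing.

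\emph{A uniform bound over the infinite mutation class.} The row-gcd vector $\vec d(\exmatrix{B}) = (1,b,a)$ is a mutation invariant (Seven). So in every seed $b \mid b'_{12}$, $a \mid b'_{13}$ and $\gcd(b'_{12},b'_{13})=1$. Hence neither entry vanishes, and $|b'_{12}|\ge b\ge 2$, $|b'_{13}|\ge a\ge 2$. Nonvanishing alone settles mutations at $i\ne 1$: $x_1$ divides exactly one monomial of the exchange binomial at $i$, so the new $x_i$ is nonzero at $p$.

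\emph{A rank-$2$ argument for returning to vertex $1$.} Take a path $\seed_{d-1}\xrightarrow{i_d}\seed_d\xrightarrow{1}\seed_{d+1}$ and freeze all vertices except $1$ and $i_d$ in $\seed_{d-1}$. By induction, $x_1$ vanishes at $p$ both in $\seed_{d-1}$ and in $\mu_1(\seed_{d-1})$. So $(\zeta,1,\dots,1)$, with $\zeta$ a primitive $|b_{1i_d}|$-th root of unity, fixes $p$ in this rank-$2$ algebra. This element is nontrivial precisely because $|b_{1i_d}|\ge 2$, so $p$ is deep there. The Beyer--Muller lemma (zeros are sticky in connected rank $2$) then gives $x_1(p)=0$ in $\seed_{d+1}$.

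This is the rigorous form of your ``local stabilizer'' heuristic. The rank-one $\C^\times$ does not propagate through mutations; only finite groups of $|b'_{1i}|$-th roots of unity survive, one rank-$2$ piece at a time. The $a$-th and $b$-th roots of unity intersect trivially, which is why there is no global stabilizer.
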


\begin{remark}
    While the quiver $Q$ in Theorem \ref{thm:main2-intro} does not feature frozen variables, we will see below in Proposition \ref{prop:adding-frozen-variables} that Theorem \ref{thm:main2-intro} implies that we can add arbitrary frozen variables to $Q$ and the cluster algebra still has mysterious points. In particular, we have examples of acyclic cluster algebras with really full rank that have mysterious points. 
\end{remark}

\begin{example}\label{ex:mysterious-point-intro}
As an example, consider the quiver 
\[\begin{tikzcd}
	3 && 1 && 2
	\arrow["2", from=1-1, to=1-3]
	\arrow["3"', from=1-5, to=1-3]
\end{tikzcd}\]
We claim that any point $p$ in the locus $\{x_1 = x_1' = 0\}$ is mysterious. An example of such a point $p$ is taking $x_1(p) = x'_1(p) = 0$ and $x_2(p) = x_2'(p) = -1$ and $x_3(p) = x_3'(p) = 1$, so this locus is nonempty. Here, we are using $x_1, x_2, x_3$ for the initial seed, and $x_i'$ for the new cluster variable we obtain when mutating in direction $i$, see Section \ref{sec:acyclic}. Since $\gcd(2,3) = 1$, a straightforward computation shows that $p$ has trivial stabilizer under the action of the cluster dilation group, so to show it is mysterious we have to show it is deep. For this, note that upon freezing either of the variables $2$ or $3$, the point $p$ acquires a stabilizer. This means that if $\seed'$ is a seed such that $p \in T(\seed')$, $\seed'$ is at least one mutation at $2$ and one mutation at $3$ away from $\seed$. It is clearly also at least one mutation away from $\seed$. Now we use induction on the number of mutations away from $\seed$, and the fact that for any quiver $Q'$ mutation-equivalent to $Q$ one has $|b_{12}(Q')|, |b_{13}(Q')| \geq 2$ to show that $p$ does not belong to any cluster torus. See Proposition \ref{prop:so-may-deep} for details.  
\end{example}

Thus we have that \cite[Conjecture 1.1]{deep} is in general false. However, as shown by the results of this paper, it seems to be true in many cases. It'd be interesting to find conditions guaranteeing the non-existence of mysterious points. 

\begin{remark}
    The quiver in Theorem \ref{thm:main2-intro} is an example of a \emph{key}, a class of quivers studied in \cite{Ervin}. In fact, keys tend to have mysterious points. See Section \ref{subsec:keys} below.
\end{remark}

\begin{remark}
The cluster algebras considered in \cite{deep} (that do not have mysterious points) all come from positroid varieties in the Grassmannian \cite{GL-positroid} (which in turn come from Postnikov's plabic graphs \cite{postnikov}). It is shown in \cite{schwartz} that (up to frozen variables) every tree quiver also comes from a plabic graph. It is thus natural to conjecture that cluster algebras associated to plabic graphs do not have mysterious points. 
\end{remark}

The paper is organized as follows. In Section \ref{sec:cluster-varieties} we give preliminary results on cluster algebras/varieties and set up notations. In particular, we define the cluster dilation group and present it as a subgroup of a torus. In Section \ref{sec:trees} we prove Theorem \ref{thm:main-intro}. In Section \ref{sec:deep-rank-2} we discuss a method to decide whether a point is deep based on restriction to  rank $2$ subalgebras. This has the advantage of not referring to the stabilizer of a point and will ultimately lead us to produce a family of mysterious points in Section \ref{sec:mysterious-points}, in particular proving Theorem \ref{thm:main2-intro}. Moreover, in this section we exhibit several families of quivers with mysterious points, the simplest ones being those considered by Theorem \ref{thm:main2-intro}. 
These include quivers of arbitrarily large rank, as well as locally-acyclic quivers which are not mutation acyclic.

\subsection*{Acknowledgements} We thank Daniel Pérez Melesio and David Speyer for stimulating discussions. We also thank Marco Castronovo and Mikhail Gorsky for their comments on a preliminary version of the paper. The work of S.N. was partially supported by NSERC Discovery Grant RGPIN-2022-03960 and the Canada Research Chairs program, grant number CRC-2021-00120. The work of J.S. was partially supported by UNAM’s PAPIIT Grant IA102124 and SECIHTI Project C-F-2023-G-106.\\

This material is based upon work supported by the National Science Foundation under Grant No. DMS-1929284 while the authors were in residence at the Institute for Computational and Experimental Research in Mathematics in Providence, RI, during the Categorification and Computation in Algebraic Combinatorics semester program.

\section{Cluster varieties} \label{sec:cluster-varieties} 

We give preliminary results on cluster algebras and cluster varieties. 

\subsection{Cluster algebras} Let $n, m$ be non-negative integers, and let $\ambient$ be the field of rational functions in $n+m$ variables with complex coefficients.

Let $Q$ be an ice quiver with $n$ mutable vertices, labeled $\{1, \dots, n\}$, as well as $m$ frozen ones, labeled $\{n+1, \dots, n+m\}$. We will always assume that $Q$ has no loops or oriented $2$-cycles, and that there are no arrows between two frozen vertices.

We define an $(m+n)\times n$ matrix $\exmatrix{B}(Q) = (b_{ij})$ defined as follows:
\[
b_{ij} := \#\{\text{arrows $i \to j$}\} - \#\{\text{arrows $j \to i$}\}.
\]
We call $\exmatrix{B}(Q)$ the \emph{extended exchange matrix} of the quiver $Q$. Since $Q$ has no loops or oriented $2$-cycles and there are no arrows between frozen vertices, the data of $Q$ and of $\exmatrix{B}(Q)$ are equivalent. 
The number of mutable vertices $n$ is called the \emph{rank} of $Q$. We denote by $Q^{\mut}$ the quiver obtained after deleting all the frozen vertices and arrows incident to them, and call it the mutable part of $Q$. 

\begin{definition}
    A \emph{seed} in $\ambient$ is the data $\seed = (\exmatrix{B}, \excluster{x})$ where
    \begin{enumerate}
        \item $\exmatrix{B} = \exmatrix{B}(Q)$ is the extended exchange matrix of an ice quiver $Q$.
        \item $\excluster{x} = \{x_1, \dots, x_{m+n}\} \subseteq \ambient$ is such that $\ambient = \C(x_1, \dots, x_{m+n})$. 
    \end{enumerate}
    We call $\excluster{x}$ the \emph{extended cluster} of the seed $\seed$, and its elements are called \emph{cluster variables}. The cluster variables $x_1, \dots, x_n$ are called \emph{mutable} while the rest $x_{n+1}, \dots, x_{n+m}$ are called \emph{frozen}. 
\end{definition}

If $k$ is a mutable vertex of $Q$, the mutation $\mu_k(Q)$  of $Q$ at $k$ is defined to be the quiver obtained from~$Q$ by the following three-step procedure: (1) for each pair of arrows $i \to k \to j$ in $Q$ where $i$ and $j$ are not both frozen, add an arrow $i \to j$; (2) reverse all arrows incident to $k$; (3) delete a maximal collection of oriented $2$-cycles that may have been created in step (1).
Two quivers are \emph{mutation-equivalent} if they are related by a sequence of mutations.

\begin{definition}
    Let $\seed = (\exmatrix{B}, \excluster{x})$ be a seed, and let $1 \leq k \leq n$. The mutation $\mu_k(\seed)$ is the seed $\mu_k(\seed) = (\mu_k(\exmatrix{B}), \mu_k(\excluster{x}))$ where 
    \begin{enumerate}
        \item $\mu_k(\exmatrix{B})$ is the extended exchange matrix of the quiver $\mu_k(Q)$. 
        \item $\mu_k(\excluster{x}) = (x'_1, \dots, x'_{n+m})$, where $x'_i = x_i$ for $i \neq k$, and $x'_k = x_k^{-1}\left(\prod_{b_{ik} > 0}x_i^{b_{ik}} + \prod_{b_{ik} < 0}x_i^{-b_{ik}}\right)$. 
    \end{enumerate}
\end{definition}

We remark that mutation is involutive, that is, $\mu_k(\mu_k(\seed)) = \seed$. 

\begin{definition}
    Let $\seed = (\exmatrix{B}, \excluster{x})$ be a seed in $\ambient$. The \emph{cluster algebra} $A(\seed)$ is defined to be the $\C[x_{n+1}^{\pm1}, \dots, x_{n+m}^{\pm 1}]$-subalgeba of $\ambient$ generated by all the cluster variables of $\seed$ and all its iterated mutations. We denote by $\seeds(\seed)$ the set of all seeds obtained from $\seed$ by iterated mutations.
\end{definition}

The following result is essential in the theory of cluster algebras.

\begin{theorem}[The Laurent phenomenon \cite{LP}]
Let $\seed' \in \seeds(\seed)$. Then, any cluster variable of $\seed'$ can be written as a Laurent polynomial in the cluster variables of $\seed$. 
\end{theorem}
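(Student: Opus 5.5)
The plan is to follow the caterpillar argument of Fomin--Zelevinsky \cite{LP}. By the universality of coefficients, we may first enlarge the set of frozen vertices, for instance to principal coefficients, treating the frozen variables as independent indeterminates. A Laurent expression with coefficients in $\mathbb{Z}[x_{n+1}^{\pm1},\dots,x_{n+m}^{\pm1}]$ specializes to one for the original seed, so nothing is lost.

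With generic coefficients the exchange binomials $P_k=\prod_{b_{ik}>0}x_i^{b_{ik}}+\prod_{b_{ik}<0}x_i^{-b_{ik}}$ are pairwise coprime, and each is coprime to every cluster variable, in the polynomial ring $\mathcal{L}_0=\C[x_1,\dots,x_{n+m}]$ (frozen variables inverted as needed). Write $\mathcal{L}(\seed)$ for the Laurent ring $\C[x_1^{\pm1},\dots,x_{n+m}^{\pm1}]$. We then induct on the length $d$ of a mutation sequence $\seed=\seed_0,\seed_1,\dots,\seed_d=\seed'$. The inductive statement is: every cluster variable of $\seed_d$ lies in $\mathcal{L}(\seed)$.

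The key reduction is the caterpillar lemma. Suppose the first two mutations are $\mu_k$ and then $\mu_l$ with $l\neq k$. By induction applied from $\seed_1=\mu_k(\seed)$ and from $\seed_2$, a target variable $z$ is Laurent in the clusters of $\seed_1$ and of $\mu_l\mu_k(\seed)$. Inserting the path $\mu_k\mu_l\mu_k$ (and using $\mu_k\mu_k=\mathrm{id}$), it also suffices to know that $z$ is Laurent in $\mu_k\mu_l\mu_k(\seed)$, which is again at distance $<d$ from a seed adjacent to $\seed$.

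Substituting $x_k'=P_k/x_k$ into a Laurent polynomial in $\seed_1$, we get $z=f/(x^{a}P_k^{s})$ with $f\in\mathcal{L}_0$. Similarly, going through the other side gives $z=g/(x^{b}\,P_l^{\,t}\,\widetilde P_k^{\,u})$, where $\widetilde P_k$ is the exchange polynomial for $k$ rewritten in the cluster of $\seed$. Hence the only possible non-monomial denominators of $z$ in $\mathcal{L}(\seed)$ are powers of $P_k$, and powers of $P_l$ and $\widetilde P_k$.

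The main obstacle is showing that these denominators are coprime, so that they cancel. Concretely, we must show $\gcd(P_k,\widetilde P_k)=1$ and $\gcd(P_k,P_l)=1$ in $\mathcal{L}_0$. The expression $\widetilde P_k$ is obtained from $\mu_l\mu_k$ by substituting $x_l'=P_l/x_l$, so this reduces to a rank-$2$ computation involving only $x_k,x_l$ and the remaining variables treated as coefficients.

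I would handle this by an explicit analysis of the rank-$2$ exchange relations $x_{j-1}x_{j+1}=x_j^{c}+\cdots$. One first checks directly that the first few variables of the rank-$2$ sequence are Laurent in $(x_k,x_l)$. One then uses the fact that generic frozen variables make each binomial $P_k$ irreducible, or at least coprime to the monomials and to $P_l$. Once coprimality is established, $z\in\mathcal{L}(\seed)$, and the induction closes.
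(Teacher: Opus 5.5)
The paper gives no proof of this statement (it is quoted from \cite{LP}), and your outline is the Fomin--Zelevinsky caterpillar argument. However, the step you call the main obstacle is set up incorrectly whenever $c:=|b_{kl}|\neq 0$.

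The cluster of $\mu_k\mu_l\mu_k(\seed)$ contains $x_l'=Q/x_l$ and $x_k''=R/x_k'$. Here $Q$ is the $l$-th exchange binomial of $\mu_k(\seed)$ and $R$ is the $k$-th exchange binomial of $\mu_l\mu_k(\seed)$. Since $Q$ involves $x_k'^{\,c}$, one does not have $x_l'=P_l/x_l$. For the quiver $1\to 2$, with $P_1=1+x_2$ and $P_2=1+x_1$, one gets $x_2'=(1+x_1+x_2)/(x_1x_2)$. So $P_l$ is not the relevant denominator.

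With the correct $x_l'$, your $\widetilde P_k$ is $R=x_k'x_k''=P_kx_k''/x_k$. Since $x_k''$ is itself Laurent in $\excluster{x}$, $P_k$ divides $\widetilde P_k$. Hence the coprimality $\gcd(P_k,\widetilde P_k)=1$ you aim to prove is false, for every choice of coefficients. In the example, $1+x_2'=(1+x_1)(1+x_2)/(x_1x_2)$ and $x_1''=(1+x_1)/x_2$. Moreover, unless you already know $x_k''\in\mathcal{L}(\seed)$, positive powers of $x_k''$ in $z$ put $P_k$ into your second denominator, and the cancellation argument collapses.

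What the argument needs is $x_l',x_k''\in\mathcal{L}(\seed)$ and $\gcd(x_k',x_l')=\gcd(x_k',x_k'')=1$ in $\mathcal{L}(\seed)$. Then $z=f/x_k'^{\,a}=g/(x_l'^{\,b}x_k''^{\,e})$ with $f,g\in\mathcal{L}(\seed)$ forces $x_k'^{\,a}\mid f$. Say $b_{lk}=c>0$ (the other sign is symmetric), and write $Q=x_k'^{\,c}M_1+M_2$ and $R=x_l'^{\,c}R_1+R_2$, with $M_i,R_i$ monomials in the $x_j$, $j\neq k,l$.

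Then $x_l'=N/(x_k^cx_l)$ with $N=P_k^cM_1+x_k^cM_2$. The matrix mutation rule gives the rank-$2$ identity $M_2^cR_1+x_l^cR_2=L\,P_k$ with $L$ a monomial. Expanding $N^c$ yields $x_k''=H/(x_k^{c^2-1}x_l^c)$ with $H=x_k^{c^2}L+P_k^{c-1}(\cdots)$. Hence $N\equiv x_k^cM_2$ modulo $P_k$, and for $c\geq 2$ also $H\equiv x_k^{c^2}L$ modulo $P_k$. For $c=1$, $H=x_kL+M_1R_1$ is linear in $x_k$ with monomial coefficients, while $P_k$ does not involve $x_k$. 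So for $c\geq 1$ the coprimality needs nothing beyond the fact that no $x_j$ divides $P_k$.

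Your condition $\gcd(P_k,P_l)=1$ is the right one exactly when $b_{kl}=0$, where $\mu_k\mu_l\mu_k(\seed)=\mu_l(\seed)$. There the coefficient reduction is genuinely needed: for $1\to 2\leftarrow 3$ one has $P_1=P_3=1+x_2$, and $x_1x_3/(1+x_2)$ is Laurent in the clusters of $\mu_1(\seed)$ and $\mu_3(\seed)$ but not in that of $\seed$. Since your induction uses $\mu_k(\seed)$ and $\mu_k\mu_l\mu_k(\seed)$ as initial seeds, this coprimality must hold at every seed. Principal coefficients are not preserved by mutation, but really full rank of $\exmatrix{B}$ is. It makes the exchange binomials irreducible with non-proportional exponent vectors, hence pairwise coprime.
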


\subsection{Cluster varieties and the deep locus} 

\begin{definition}
    Let $\seed$ be a seed and $A(\seed)$ its associated cluster algebra. The affine variety
    \[
    \var(\seed) := \Spec(A(\seed))
    \]
    is called the \emph{cluster variety} associated to $\seed$. 
\end{definition}

Now let $\seed' = \big (\exmatrix{B'}, \excluster{z} = (z_1, \dots, z_n, z_{n+1}, \dots, z_{n+m})\big) \in \seeds(\seed)$ be a seed that can be obtained from $\seed$ by mutations. By the Laurent phenomenon, every cluster variable of $A(\seed)$ can be written as a Laurent polynomial in the cluster variables of $\seed'$, so that $A(\seed) \subseteq \C[z_1^{\pm 1}, \dots, z_{m+n}^{\pm 1}]$. Since, by definition, $\C[z_1, \dots, z_{m+n}] \subseteq A(\seed)$ we obtain
\begin{equation}\label{eq:cluster-tori}
A(\seed)[z_1^{-1}, \dots, z_{n+m}^{-1}] = \C[z_1^{\pm 1}, \dots, z_{n+m}^{\pm 1}]. 
\end{equation}
Thus, $\seed'$ determines an open subset $T(\seed')$ of $\var(\seed)$, defined by the non-vanishing of the variables $z_1, \dots, z_{m+n}$ and by \eqref{eq:cluster-tori} we have
\[
T(\seed') \cong (\C^{\times})^{n+m}.
\]

\begin{definition}
    The open tori $T(\seed')$ where $\seed' \in \seeds(\seed)$ are called the \emph{cluster tori} of the variety $\var(\seed)$. The \emph{cluster manifold} is
    \[
    \manifold(\seed) := \bigcup_{\seed' \in \seeds(\seed)}T(\seed') \subseteq \var(\seed). 
    \]
    The \emph{deep locus} of $\seed$ is
    \[
    \deep(\seed) := \var(\seed) \setminus \manifold(\seed). 
    \]
\end{definition}

In other words, an element $p \in \var(\seed)$ belongs to the deep locus if, for any seed $\seed' = (\exmatrix{B'}, \excluster{z}) \in \seeds(\seed)$, there exists $i \in \{1, \dots, n\}$ such that $z_i(p) = 0$. 

\subsection{The cluster dilation group} In \cite{deep} the authors observe a relationship between the deep locus of $\var(\seed)$ and the action of a certain group on $\var(\seed)$. 

\begin{definition}
    Let $\seed$ be a seed and $A(\seed)$ its corresponding cluster algebra. An algebra automorphism $\varphi: A(\seed) \to A(\seed)$ is said to be a \emph{cluster dilation} if, for every cluster variable (in every seed) $z$ of $A(\seed)$, there exists $\varphi_z \in \C^{\times}$ such that $\varphi(z) = \varphi_zz$. The set of all cluster dilations of $A(\seed)$ forms a group, called the \emph{cluster dilation group} $\dil(\seed)$. 
\end{definition}

\begin{remark}\label{rmk:terminology}
        The cluster dilation group $\dil(\seed)$ is called \emph{cluster automorphism group} in \cite{LSI, LSII, deep, GSV} among others. Since the term \emph{cluster automorphism group} is also used to name a different object (see \cite{cluster-auto, cluster-auto-2, cluster-iso-lawson, cluster-iso-cao} among others) we prefer the term \emph{cluster dilation group.} 
\end{remark}

By the Laurent phenomenon a cluster dilation is entirely determined by its values in a single extended cluster, so that in fact every seed $\seed' \in \seeds(\seed)$ determines an embedding $\dil(\seed) \hookrightarrow (\C^{\times})^{n+m}$. Following \cite{GSV}, we can determine the image of this embedding precisely, as follows. Let $\exmatrix{B'} = (b'_{ij})$ be the extended exchange matrix of the seed $\seed'$. 

\begin{lemma}[Lemma 2.3, \cite{GSV}]\label{lem:cluster-dilation-group} For any seed $\seed' = (\exmatrix{B'}, \excluster{z}) \in \seeds(\seed)$, the cluster dilation group $\dil(\seed)$ can be identified with the subgroup of $(\C^{\times})^{n+m}$ defined by the equations
\begin{equation}\label{eq:equations-dilation-group}
    \prod_{\substack{i = 1 \\ b'_{ik} > 0}}^{m+n}t_i^{b'_{ik}} = \prod_{\substack{i = 1 \\ b'_{ik} < 0}}^{m+n}t_i^{-b'_{ik}}, \qquad k = 1, \dots, n.
\end{equation}
\end{lemma}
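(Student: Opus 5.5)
The plan is to prove the description in the fixed seed $\seed' = (\exmatrix{B'}, \excluster{z})$ directly, in two inclusions.

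\emph{Dilations give solutions.} Let $\varphi \in \dil(\seed)$. By the Laurent phenomenon, $\varphi$ is determined by the scalars $t_i := \varphi_{z_i}$, $i = 1, \dots, n+m$. This gives an injective group homomorphism $\dil(\seed) \to (\C^{\times})^{n+m}$. To see that the image satisfies \eqref{eq:equations-dilation-group}, fix a mutable $k$ and apply $\varphi$ to the exchange relation
\[
z_k z_k' = \prod_{b'_{ik}>0} z_i^{b'_{ik}} + \prod_{b'_{ik}<0} z_i^{-b'_{ik}}.
\]
The left side becomes $\varphi_{z_k}\varphi_{z'_k}\, z_k z'_k$. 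The right side becomes $M_+ \prod_{b'_{ik}>0} z_i^{b'_{ik}} + M_- \prod_{b'_{ik}<0} z_i^{-b'_{ik}}$, where $M_\pm$ are the two monomials in the $t_i$ appearing in \eqref{eq:equations-dilation-group}. The two monomials in the $z_i$ are distinct, because there are no loops and column $k$ of $\exmatrix{B'}$ is nonzero (in the degenerate case of a zero column both monomials equal $1$ and the equation is vacuous). Since the $z_i$ are algebraically independent, comparing coefficients forces $M_+ = M_- = \varphi_{z_k}\varphi_{z'_k}$, which is exactly \eqref{eq:equations-dilation-group}.

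\emph{Solutions give dilations.} Conversely, let $t$ satisfy \eqref{eq:equations-dilation-group}. Define $\varphi$ on $\ambient = \C(z_1, \dots, z_{n+m})$ by $z_i \mapsto t_i z_i$; this is a field automorphism. The key step is to show that every cluster variable of every seed is an eigenvector of $\varphi$, and that the equations propagate under mutation. I will prove by induction on the mutation distance from $\seed'$ the following claim: for every seed $\seed'' = (\exmatrix{B''}, \excluster{w}) \in \seeds(\seed)$ we have $\varphi(w_i) = s_i w_i$ for scalars $s_i$, and the vector $s$ satisfies \eqref{eq:equations-dilation-group} with respect to $\exmatrix{B''}$.

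For the inductive step, mutate at $k$. Because $s$ satisfies the equation for $k$, both monomials on the right side of the exchange relation scale by the same factor $M$. Hence $w'_k$ scales by $M/s_k$, so the new seed consists of eigenvectors. It remains to check the equations for $\mu_k(\exmatrix{B''})$, which I expect to be the main obstacle. The cleanest route is to rewrite \eqref{eq:equations-dilation-group} multiplicatively: it says that $\prod_i s_i^{b''_{ik}} = 1$ for all mutable $k$, i.e.\ the character $\lambda = (\log s_i)$ satisfies $\lambda^T \exmatrix{B''} = 0$ modulo $2\pi i$. Under the new variables, $\lambda'_j = \lambda_j$ for $j \neq k$ and $\lambda'_k = -\lambda_k + \sum_{b''_{ik}>0} b''_{ik}\lambda_i$. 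One then verifies $\lambda'^T \mu_k(\exmatrix{B''}) = 0$ using the matrix mutation rule $b'_{ij} = b_{ij} + \operatorname{sgn}(b_{ik})[b_{ik}b_{kj}]_+$. Concretely, for $j = k$ the column simply changes sign. For $j \neq k$, I expand the sum and use the relation $\lambda^T B_{\bullet k} = 0$, splitting into the cases $b''_{kj} > 0$ and $b''_{kj} < 0$. This is the standard computation that the kernel of $\exmatrix{B}^T$ transforms like $\mathbf{g}$-vector/grading data, i.e.\ like a $\Z$-grading compatible with mutation in the sense of \cite{GSV}.

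Finally, $\varphi$ preserves $A(\seed)$ because it rescales all generators, and the same holds for $\varphi^{-1}$ (which corresponds to $t^{-1}$). So $\varphi$ restricts to an automorphism of $A(\seed)$ that is a cluster dilation. Since the two constructions are mutually inverse, this gives the identification.
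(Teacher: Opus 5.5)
Your proof is correct. The first direction is exactly the observation the paper makes right after the statement: the equations say precisely that the exchange binomial for $z_k'$ is homogeneous. The converse, which propagates the equations through matrix mutation with the case split on the sign of $b''_{kj}$ (using the relation at $k$ when $b''_{kj}<0$), is the standard verification behind the cited Lemma 2.3 of \cite{GSV}, which the paper does not reproduce.

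One small simplification: work directly with $\prod_i s_i^{b''_{ij}}=1$ rather than with logarithms modulo $2\pi i$. This makes the step slightly cleaner and makes it evident that the finite root-of-unity part of the group is covered.
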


Note that the equations \eqref{eq:equations-dilation-group} are obtained simply by requiring that the mutated variable \[x_k^{-1}\left(\prod_{b'_{ik}>0} x_i^{b'_{i_k}} + \prod_{b'_{ik}<0}x_i^{-b'_{ik}}\right)\] is homogeneous. 

\begin{example}\label{ex:dilation-group}
Consider the quiver (without frozen variables)
\[\begin{tikzcd}
	3 && 1 && 2
	\arrow["2", from=1-1, to=1-3]
	\arrow["3"', from=1-5, to=1-3]
\end{tikzcd}\]
so that we have the following equations
    \[\begin{array}{ccc}
    t_1^2 = 1, & t_2^3t_3^2 = 1, & t_1^3 = 1,      
    \end{array}
    \]
corresponding to the vertices $3, 1$ and $2$, respectively. Note that the first and third equations imply that $t_1 = 1$, so the cluster dilation group is simply $\{(t_2, t_3) \in (\C^{\times})^2 \mid t_2^3t_3^2 = 1\}$. 
\end{example}

By definition, $\dil(\seed)$ acts on $A(\seed)$  by algebra automorphisms so it also acts on $\var(\seed)$. Also by definition, $\dil(\seed)$ acts by dilations, hence freely, on any cluster torus $T(\seed') \subseteq \var(\seed)$. In other words, any element $p \in \var(\seed)$ with non-trivial stabilizer under the action of $\dil(\seed)$ must be deep. 

\begin{definition}
    Let $\seed$ be a seed. We define the stabilizer locus
    \[
    \stab(\seed) := \{p \in \var(\seed) \mid \Stab_{\dil(\Sigma)}(p) \neq \{1\}\}. 
    \]
\end{definition}

    From the discussion above,
    \[
    \stab(\seed) \subseteq \deep(\seed).
    \]
This inclusion need not be an equality, and we call a point $p \in \var(\seed)$ \emph{mysterious} if $p \in \deep(\seed) \setminus \stab(\seed)$. Thus, $p$ is mysterious if it is deep, but its deepness cannot be explained by looking at the cluster dilation group. 



\subsection{Acyclic case}\label{sec:acyclic}
Let $\seed = (\exmatrix{B}, \excluster{x})$ be a seed with associated quiver $Q$ and assume that the mutable part $Q^{\mut}$ is \emph{acyclic}, that is, it does not have directed cycles. 
In this case, the cluster algebra $A(\seed)$ admits an explicit description by generators and relations. Namely, by \cite[Corollary 1.21]{BFZ} we have that 
\begin{equation}\label{eq:acyclic-cluster-algebra}
A(\seed) \cong \C[x_1, x'_1, \dots, x_n, x_n', x_{n+1}^{\pm1}, \dots, x_{n+m}^{\pm1}]\Big/\left(x_ix_i' - \prod_{j = 1}^{n+m}x_{j}^{[b_{ji}]_{+}} - \prod_{j = 1}^{n+m}x_j^{[-b_{ji}]_{+}}, i = 1, \dots, n\right)
\end{equation}
and consequently we have the following description for the cluster variety
\begin{equation}\label{eq:acyclic-cluster-variety}
\var(\seed)\! \cong \!\left\{\!(p_1, p'_1, \dots, p_n, p'_n, p_{n+1}, \dots, p_{n+m})\! \in\! \C^{2n} \!\times \! (\C^{\times})^{m} \middle| p_ip'_i \!=\! \prod_{j = 1}^{n+m}p_{j}^{[b_{ji}]_{+}} \!\!+\!\! \prod_{j = 1}^{n+m}p_j^{[-b_{ji}]_{+}}\!, i = 1, \dots, n\! \right\}.
\end{equation}

    In \eqref{eq:acyclic-cluster-algebra}, the element $x'_i$ is the unique element in $\mu_i(\excluster{x}) \setminus \excluster{x}$, and the relations are the exchange relations defining $x'_i$. 

    In \cite[Section 3]{LSII}, Lam and Speyer obtain a stratification of $\var(\seed)$ whose strata are particularly nice when $\exmatrix{B}$ has \emph{really full rank}, i.e., when the rows of $\exmatrix{B}$ span $\Z^n$ as a $\Z$-module. Recall that a subset $I$ of the vertices of $Q$ is an \emph{independent set} if there are no arrows between any two of its members. 

    \begin{theorem}\label{thm:def-mathcalO}
Let $\seed$ be a seed such that $Q^{\mut}$ is acyclic. For each independent set $I \subseteq \{1,\dots,n\}$, define
\[
\mathcal{O}_I := \{(p_1, p'_1, \dots, p_n, p'_n, p_{n+1}, \dots, p_{n+m}) \in \var(\seed) \mid p_i = 0 \forall i \in I, \, p_j \neq 0 \forall j \notin I\}. 
\]
Then
\[
\var(\seed) = \bigsqcup_{\substack{I \subseteq \{1,\ldots n\} \\ \text{is independent}}} \mathcal{O}_I. 
\]
If, moreover, $\exmatrix{B}$ has really full rank then
\[
\mathcal{O}_I \cong \C^{|I|} \times (\C^{\times})^{n+m-2|I|}
\]
The coordinates on the $\C^{|I|}$ factor are $p'_i$ for $i \in I$. 
    \end{theorem}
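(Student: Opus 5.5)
The plan has two parts: first show that the vanishing locus of the mutable coordinates at any point of $\var(\seed)$ is an independent set, and then, under the really full rank hypothesis, solve the defining equations \eqref{eq:acyclic-cluster-variety} explicitly on each stratum. Throughout I work with the presentation \eqref{eq:acyclic-cluster-variety}. The frozen coordinates $p_{n+1},\dots,p_{n+m}$ are always nonzero.

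\emph{Disjointness and covering.} The sets $\mathcal{O}_I$ are clearly pairwise disjoint. For a point $p$, let $Z=\{i\le n : p_i=0\}$; I must show $Z$ is independent. Suppose $i\in Z$ has a neighbor in $Z$ (in $Q^{\mut}$). The exchange relation at $i$ reads
\[
0 = p_ip_i' = \prod_j p_j^{[b_{ji}]_+} + \prod_j p_j^{[-b_{ji}]_+}.
\]
One of the two monomials contains the vanishing neighbor, so it is zero, and hence the other monomial is zero as well. Since the frozen coordinates are nonzero, this means $i$ has both an in-neighbor and an out-neighbor in $Z$. Each of these again has a neighbor in $Z$ (namely $i$), so the same argument applies to them. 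Following out-neighbors therefore produces an infinite directed path inside the finite set $Z$. Such a path must repeat a vertex, giving a directed cycle in $Q^{\mut}$, which contradicts acyclicity. Hence $Z$ is independent, $p\in\mathcal{O}_Z$, and the union is all of $\var(\seed)$.

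\emph{Structure of $\mathcal{O}_I$.} Fix an independent set $I$. On $\mathcal{O}_I$ every $p_j$ with $j\notin I$ is invertible, so for $j\notin I$ the relation at $j$ just solves for $p_j'$ as a regular function of the other coordinates. For $i\in I$ the relation becomes
\[
\prod_j p_j^{[b_{ji}]_+} = -\prod_j p_j^{[-b_{ji}]_+}.
\]
Since $I$ is independent, only coordinates with $j\notin I$ appear here, and $p_i'$ does not appear at all, so $p_i'$ is free. This gives
\[
\mathcal{O}_I\cong \C^{|I|}\times Y, \qquad Y\subseteq(\C^\times)^{n+m-|I|},
\]
where $Y$ is cut out by the $|I|$ character equations $\chi_i = -1$ for $i\in I$. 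Here $\chi_i$ is the character with exponent vector the column $(b_{ji})_{j\notin I}$.

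\emph{Using really full rank.} The rows of $\exmatrix{B}$ span $\Z^n$, i.e.\ $\exmatrix{B}^T:\Z^{n+m}\to\Z^n$ is surjective. Projecting onto the coordinates indexed by $I$, and using that the rows $j\in I$ vanish in the columns $i\in I$ (independence), the map $\Z^{n+m-|I|}\to\Z^{|I|}$ given by the rows $j\notin I$ is still surjective. Therefore the exponent vectors of the $\chi_i$, $i\in I$, span a saturated rank-$|I|$ sublattice and extend to a basis of the character lattice $\Z^{n+m-|I|}$. Changing coordinates on the torus by such a basis, the equations become $u_i=-1$ for $|I|$ of the coordinates. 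This gives $Y\cong(\C^\times)^{n+m-2|I|}$, reduced, and hence $\mathcal{O}_I\cong\C^{|I|}\times(\C^\times)^{n+m-2|I|}$, with the $\C^{|I|}$ factor parametrized by the $p_i'$, $i\in I$.

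I expect the main obstacle to be this last step: checking carefully that the really full rank condition transfers to the restricted submatrix, which is exactly where independence of $I$ is used a second time, and keeping track of the scheme structure so that the isomorphism holds as varieties rather than only on points.
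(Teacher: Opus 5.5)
Your argument is correct. The paper gives no proof of this statement: it is quoted from Lam--Speyer \cite[Section 3]{LSII}. So you are supplying a self-contained proof where the paper relies on a citation.

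Your covering step is the acyclic case of Muller's separating-edge argument. Simultaneous vanishing at the two ends of an arrow propagates forward and backward along arrows, which yields an infinite path of vanishing variables. That is impossible in a finite acyclic quiver. The paper uses this same fact elsewhere, in the form \cite[Corollary 5.4]{muller}, in Lemma~\ref{lem:zeroes-sticky-rank-2} and Corollary~\ref{cor:keys-deep}.

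Your analysis of the strata is also sound. On $\mathcal{O}_I$ the coordinates $p_j'$ with $j\notin I$ are eliminated, and the $p_i'$ with $i\in I$ are free. What remains is the system $\prod_{j\notin I}p_j^{b_{ji}}=-1$, $i\in I$, on the torus with coordinates $p_j$, $j\notin I$. The rows of $\exmatrix{B}$ indexed by $I$ vanish on the columns indexed by $I$, and this is exactly what transfers surjectivity of $\exmatrix{B}^T$ to the $(n+m-|I|)\times|I|$ block. Hence the exponent vectors of these characters extend to a basis of the character lattice, and the locally closed subscheme has reduced coordinate ring $\C[p_i' : i\in I]\otimes\C[u_1^{\pm1},\dots,u_{n+m-2|I|}^{\pm1}]$.

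Writing it out this way buys you two things over the bare citation. It isolates the two separate uses of independence: decoupling the $p_i'$, and transferring really full rank to the block. It also shows what really full rank is needed for. Without it, the same computation exhibits $\mathcal{O}_I$ as $\C^{|I|}$ times the solution set of $|I|$ character equations on a torus, which may be empty, disconnected, or of the wrong dimension.
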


    Note that when $I = \emptyset$ the stratum $\mathcal{O}_I$ is precisely the cluster torus $T(\seed)$ corresponding to the acyclic seed $\seed$. 

We can use the explicit description \eqref{eq:acyclic-cluster-variety} to prove the following result.

\begin{lemma}\label{lem:creating-deep-points-frozens}
    Let $\seed$ be a seed such that $Q^{\mut}$ is acyclic, and let $\widetilde{\seed}$ be obtained from $\seed$ by adding $\ell$ arbitrary frozen rows to the exchange matrix. Let 
    \[
    p = (p_1,p'_1,\dots, p_n, p'_n, p_{n+1}, \dots, p_{n+m}) \in \var(\seed)
    \]
    and define the point
    \[
    \widetilde{p} = (p_1, p'_1,\dots, p_n, p'_n, p_{n+1}, \dots, p_{n+m}, 1, \dots, 1) \in \C^{2n} \times (\C^{\times})^{m+\ell}.
    \]
    Then
    \begin{enumerate}
        \item $\widetilde{p} \in \var(\widetilde{\seed})$.
        \item $p$ belongs to a cluster torus in $\var(\seed)$ if and only if $\widetilde{p}$ belongs to a cluster torus in $\var(\widetilde{\seed})$.
        \item The groups $\Stab_{\dil(\seed)}(p)$ and $\Stab_{\dil(\widetilde{\seed})}(\widetilde{p})$ are isomorphic.
    \end{enumerate}
\end{lemma}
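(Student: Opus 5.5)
Part (1) follows directly from \eqref{eq:acyclic-cluster-variety}. The quiver $\widetilde{Q}$ has the same mutable part as $Q$, so $\widetilde{Q}^{\mut}$ is still acyclic and the BFZ presentation applies to $\widetilde{\seed}$. Its exchange relations are those of $\seed$, with each monomial multiplied by powers of the new frozen variables $x_{n+m+1},\dots,x_{n+m+\ell}$. At $\widetilde{p}$ all of these new coordinates equal $1$, so each relation for $\widetilde{\seed}$ evaluates to the corresponding relation for $\seed$ at $p$, and that relation holds. Hence $\widetilde{p}\in\var(\widetilde{\seed})$.

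For (2), I will use the specialization homomorphism $\pi\colon A(\widetilde{\seed})\to A(\seed)$ that sends each new frozen variable to $1$. This map is well defined on the presentation \eqref{eq:acyclic-cluster-algebra}. The point $\widetilde{p}$ is exactly the composite of $p$ with $\pi$, i.e.\ $\widetilde p = p\circ \pi$ on $\Spec$. Next I check that $\pi$ is compatible with mutation. Mutation sequences of $\seed$ and of $\widetilde{\seed}$ correspond bijectively, since the frozen rows never affect the mutable part. By induction on the length of the mutation sequence, $\pi$ sends each cluster variable $\widetilde{z}$ of $\widetilde{\seed}'=\mu_{k_s}\cdots\mu_{k_1}(\widetilde{\seed})$ to the corresponding cluster variable $z$ of $\seed'$. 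Indeed, the exchange relation for $\widetilde{\seed}'$ specializes under $\pi$ to the exchange relation for $\seed'$, because $\pi$ kills the extra frozen monomials. To make the induction rigorous, I will argue in the ambient Laurent ring $\C[x_1^{\pm1},\dots,x_n^{\pm1},x_{n+1}^{\pm1},\dots]$, where setting the new frozen variables to $1$ is a ring map that agrees with $\pi$ on $A(\widetilde{\seed})$. With this in hand, $\widetilde{z}(\widetilde{p})=z(p)$ for corresponding variables, and the new frozen variables evaluate to $1\neq 0$. Therefore $\widetilde{p}\in T(\widetilde{\seed}')$ if and only if $p\in T(\seed')$.

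For (3), I will use Lemma \ref{lem:cluster-dilation-group} with respect to the initial seeds. Under that identification, $\dil(\widetilde{\seed})\subseteq(\C^\times)^{n+m+\ell}$ is cut out by the equations of $\dil(\seed)$, with extra factors $t_{n+m+j}^{\pm b}$ from the new rows. Since dilations act coordinatewise, $t$ stabilizes $\widetilde{p}$ exactly when $t_i=1$ for every coordinate with $p_i\neq0$ or $p'_i\neq0$ (using $t_{i'}=t_i^{-1}\prod$ for the primed coordinates) and $t_{n+m+j}=1$ for all $j$. The last condition holds because the new frozen coordinates of $\widetilde{p}$ are $1$. Setting these $t_{n+m+j}=1$ turns the equations for $\widetilde{\seed}$ into the equations for $\seed$, and the stabilizer conditions coming from the old coordinates are identical for $p$ and $\widetilde{p}$. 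So the map $(t_1,\dots,t_{n+m})\mapsto(t_1,\dots,t_{n+m},1,\dots,1)$ is an isomorphism $\Stab_{\dil(\seed)}(p)\cong\Stab_{\dil(\widetilde{\seed})}(\widetilde{p})$.

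The main obstacle is the compatibility of $\pi$ with arbitrary mutation sequences in (2). I will handle it by the induction through Laurent polynomials described above.
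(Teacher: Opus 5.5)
Your proposal is correct and matches the paper's proof. Parts (1) and (3) use the same arguments: the extra frozen coordinates of $\widetilde{p}$ equal $1$, which forces $t_{n+m+1}=\dots=t_{n+m+\ell}=1$ and reduces both the defining equations and the stabilizer conditions to those for $\seed$. For (2), you prove inline, via the specialization map and induction on mutation sequences using the Laurent phenomenon, the compatibility of ``set the new frozen variables to $1$'' with mutation that the paper cites from \cite[Lemma 3.3.2]{FWZ}.

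One small wording fix in (3): when $p'_i\neq 0$, the condition is that the character $t_i^{-1}\prod_{b_{ji}>0}t_j^{b_{ji}}$ equals $1$, not that $t_i=1$.
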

\begin{proof}
An exchange relation in $\seed$ is of the form $x_kx_k' = M_1 + M_2$ and an exchange relation in $\widetilde{\seed}$ is of the form $x_kx_k' = M'_1 + M'_2$, where $M_i$ differs from $M'_i$ by a monomial in the extra frozen variables in $\widetilde{\seed}$. From this observation, (1) is clear. Statement (2) follows from \cite[Lemma 3.3.2]{FWZ}. Finally, let $\widetilde{t} = (t_1, \dots, t_{n+m}, t_{n+m+1}, \dots, t_{n+m+\ell}) \in \Stab_{\dil(\widetilde{\seed})}(\widetilde{p})$. Since the last $\ell$ coordinates of $\widetilde{p}$ are nonzero, we have $t_{n+m+1}, \dots, t_{n+m+\ell} = 1$. It is then easy to see that $(t_1, \dots, t_{n+m}) \mapsto (t_1, \dots, t_{n+m}, 1, \dots, 1)$ defines an isomorphism $\Stab_{\dil(\seed)}(p) \to \Stab_{\dil(\widetilde{\seed})}(\widetilde{p})$. 
\end{proof}

\section{Tree cluster algebras}\label{sec:trees}

\subsection{Statement of the theorem} The goal of this section is to prove the following result. 

\begin{theorem}\label{thm:main-notintro}
Let $\seed = (\exmatrix{B}(Q), \excluster{x})$ be a seed such that $Q^{\mut}$ is a tree. Then, $\var(\seed)$ has no mysterious points. 
\end{theorem}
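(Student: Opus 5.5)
We argue by induction on the number $n$ of mutable vertices, following the strategy sketched in the introduction. Let $p\in\var(\seed)$ have trivial stabilizer under $\dil(\seed)$; we must find a cluster torus containing $p$. Since a tree is bipartite, after mutating at sources we may assume $Q^{\mut}$ is bipartite, with every mutable vertex a source or a sink. Each one-step mutation $\mu_k(Q)$ at a source or sink only reverses the arrows at $k$, so it is again such a tree. Also, Lemma \ref{lem:creating-deep-points-frozens} lets us pass freely to seeds with extra frozen rows. So, after adding frozen variables with coordinate $1$, we may assume $\exmatrix{B}$ has really full rank, and Theorem \ref{thm:def-mathcalO} applies. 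Hence $p\in\mathcal{O}_I$ for an independent set $I$ of $Q^{\mut}$, and if $I=\emptyset$ we are done.

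\textbf{Freezing step.} Suppose there is a mutable $i\notin I$ all of whose mutable neighbours also lie outside $I$, so that $x_i(p)\neq0$ and $x_j(p)\neq 0$ for every neighbour $j$. Freeze $i$ to obtain $\widetilde{\seed}$; its mutable part is a forest, whose components are trees. We need two facts.
\begin{enumerate}
\item The locus $\{x_i\neq0\}\subseteq\var(\seed)$ is identified with $\var(\widetilde{\seed})$. This follows from the presentation \eqref{eq:acyclic-cluster-algebra}: inverting $x_i$ removes the generator $x_i'$ and its relation.
\item $\Stab_{\dil(\widetilde{\seed})}(p)$ is trivial. Take $t$ in this stabilizer. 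By Lemma \ref{lem:cluster-dilation-group}, the only equation of $\dil(\seed)$ lost upon freezing is the one indexed by $k=i$, which involves $t_j$ for the neighbours $j$ of $i$ (and frozen vertices). Since $x_j(p)\neq0$ for all those $j$, we have $t_j=1$, and $t_i=1$ because $x_i(p)\ne 0$. Frozen coordinates are nonzero, so their $t$'s are $1$ as well. Hence the lost equation holds automatically, $t\in\dil(\seed)$, and $t=1$.
\end{enumerate}
The forest case reduces to the tree case, since the cluster variety is a product over components up to frozen variables, or one simply runs the induction with forests. Induction then gives a seed of $\widetilde{\seed}$, and hence of $\seed$, whose torus contains $p$. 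So we may assume $I$ is a \emph{maximal} independent set: every vertex outside $I$ has a neighbour in $I$.

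\textbf{Maximal independent sets.} This is the main obstacle. Here we use $p_i'$ together with the bipartite structure. Pick $i\in I$ with $p_i'\neq0$, if one exists, and mutate at $i$. The new seed $\mu_i(\seed)$ is again a source/sink tree, and $p$ lies in its stratum $\mathcal{O}_{I\setminus\{i\}}$, since $x_i$ is replaced by $x_i'$. This is no longer maximal, so the previous step applies, or we iterate. It remains to treat the case $p_i=p_i'=0$ for all $i\in I$. Then, for each $i\in I$, the exchange relation gives
\[
\prod_{b_{ji}>0}p_j^{b_{ji}}+\prod_{b_{ji}<0}p_j^{-b_{ji}}=0 ,
\]
and all $p_j$ appearing in it are nonzero. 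We then want to show that $p$ has a nontrivial stabilizer, contradicting our assumption. The candidate is $t$ with $t_i=\lambda$ for $i\in I$ and $t_j=1$ otherwise, adjusted as needed. Because $I$ is independent and the tree has no parallel arrows, the equations of Lemma \ref{lem:cluster-dilation-group} at vertices $k\notin I$ each involve some $t_i$ with $i\in I$ with exponent $1$. The equations at $k\in I$ are trivial, since all neighbours of $k$ lie outside $I$. Using the tree structure, in particular the absence of cycles, we solve these equations for a one-parameter family of the $t_i$, $i\in I$, by fixing a root and propagating along the tree. Such $t$ fix $p$ because $p_i=p_i'=0$ on $I$. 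The delicate point is to check that this system admits a nontrivial solution; this is where the absence of multiple edges is essential, as the key example with weights $2,3$ shows. If this direct argument fails, we fall back on mutating at leaves of $I$ and reducing to smaller trees.
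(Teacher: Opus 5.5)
Your reductions, the freezing step, and the move ``if $p_i'\neq 0$ for some $i\in I$, mutate at $i$ and freeze'' match the paper's Case~1 and the start of its Case~2. The gap is the final case: for $I$ maximal with $p_i=p_i'=0$ on $I$, it is \emph{not} true that $p$ must have a nontrivial stabilizer.

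Already for the really full rank quiver $1\to 2$ without frozen vertices, $\dil(\seed)$ is trivial and $I=\{1\}$ is maximal. The point $(p_1,p_1',p_2,p_2')=(0,0,-1,-1)$ lies in this stratum. It is not deep: it lies in the torus of the seed obtained by mutating at $2$ and then at $1$, whose cluster is $\{x_1'x_2'-1,\,x_2'\}$. Similarly, on the path $1-2-3-4$ with $I=\{1,4\}$, the equations at $2$ and $3$ force $t_1=t_4=1$. In general, any $j\notin I$ with a single neighbour $i\in I$ forces $t_i=1$, so the propagation you describe can collapse. Your fallback does not help either: mutating at $i\in I$ only replaces $x_i$ by $x_i'$, which also vanishes at $p$.

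The missing idea is a two-step mutation that leaves source/sink form. Take $j\notin I$ adjacent to $i\in I$, mutate at $j$, then at $i$. One finds
\[
x_i''=\frac{c_1\,x_i'+c_2\prod_{l\in N(j)\setminus\{i\}}x_l}{x_j},
\]
with $c_1,c_2$ frozen monomials; in the paper's normalization $c_1=1$ and $c_2=x_{\overline{j}}$. Hence $x_i''(p)\neq 0$ exactly when $i$ is the only neighbour of $j$ in $I$. In that case:
\begin{itemize}
\item $x_j'(p)\neq 0$, because $x_i(p)=0$;
\item the other neighbours of $i$ lie outside $I$, so their variables are nonzero at $p$;
\item after freezing $i$, the mutable part of $\mu_i\mu_j(Q)$ is a tree on $n-1$ vertices.
\end{itemize}
So your freezing step and the induction apply. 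Since $\mu_i\mu_j(Q)$ contains oriented triangles $i\to j\to k\to i$, identify $\{x_i''\neq 0\}$ with the frozen variety via Muller's localization result, not via the acyclic presentation.

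If instead every $j\notin I$ has at least two neighbours in $I$, count the tree edges between $I$ and its complement: $2|I^c|\le |I|+|I^c|-1$. Then the $|I^c|$ equations $\prod_{i\in N(k)\cap I}t_i=1$ cut out a positive-dimensional subgroup of $(\C^\times)^{I}$, which gives the nontrivial stabilizer.

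This dichotomy is what the paper's Cases~2.1, 2.2.1 and 2.2.2 carry out. Those cases are: a leaf outside $I$; $I$ equal to all sinks (or all sources); or a vertex $j_0$ whose only neighbours are $i_0\in I$ and $j_1\notin I$. Without it, your argument does not close.
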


We will prove Theorem \ref{thm:main-notintro} by induction on $n$, the number of vertices of the quiver $Q^{\mut}$. However, first we make a few observations that will allow us to assume that every vertex of $Q$ is a sink or a source, so that the exchange relations are simplified.

\subsection{Reductions}\label{sec:reductions} The next result tells us, essentially, that we are free to choose the frozen variables in $Q$ as long as the quiver $Q$ has really full rank. 

\begin{lemma}\label{lem:can-choose-frozens}[Corollary 3.19, \cite{deep}]
Assume Theorem \ref{thm:main-notintro} is true for some choice of frozen variables that makes $\exmatrix{B}(Q)$ have really full rank. Then, Theorem \ref{thm:main-notintro} is true for an arbitrary choice of frozen variables.
\end{lemma}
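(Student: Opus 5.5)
The plan is to compare the two seeds directly. Let $\seed_1$ carry frozen rows making $\exmatrix{B}$ have really full rank, and let $\seed_2$ carry an arbitrary choice of frozen rows. Both are dominated by a common seed $\seed_3$ whose frozen rows are the union of the two sets; $\seed_3$ still has really full rank, because adding rows preserves the property that the rows span $\Z^n$.

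\textbf{Step 1: reduce to the principal-type seed $\seed_3$.} First I will show that the statement for $\seed_1$ implies it for $\seed_3$, and that the statement for $\seed_3$ implies it for $\seed_2$. Both steps use the same tool, Lemma~\ref{lem:creating-deep-points-frozens}, applied in the tree (hence acyclic) setting.

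\textbf{Step 2: pass from $\seed_3$ down to $\seed_2$.} Take $p\in\var(\seed_2)$ with trivial dilation stabilizer. Extend it to $\widetilde p\in\var(\seed_3)$ by setting the extra frozen coordinates equal to $1$. By part (3) of the lemma, $\widetilde p$ also has trivial stabilizer. By the theorem for $\seed_3$, $\widetilde p$ lies in a cluster torus. Then part (2) of the lemma gives that $p$ lies in a cluster torus, so $p$ is not mysterious.

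\textbf{Step 3: pass from $\seed_1$ up to $\seed_3$.} Here the lemma does not directly apply, since a general point of $\var(\seed_3)$ need not have its extra frozen coordinates equal to $1$. Write $\var(\seed_3)$ using the acyclic presentation \eqref{eq:acyclic-cluster-variety}. I would use the really full rank of $\seed_1$ to find, for any values $c\in(\C^\times)^\ell$ of the extra frozen variables, a torus element $s$ that rescales all variables compatibly with the exchange relations and sets those extra frozen coordinates to $1$. Concretely, choose weights on the $\seed_1$-frozen variables so that each exchange monomial is multiplied by the same scalar as the corresponding monomial involving the extra frozens. This is solvable precisely because the rows of $\exmatrix{B}(\seed_1)$ span $\Z^n$, so any character of the mutable lattice can be realized.

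Such a rescaling maps $\var(\seed_3)$ isomorphically to itself. It maps cluster tori to cluster tori, since it acts by scalars on every cluster variable (the same homogeneity argument as in Lemma~\ref{lem:cluster-dilation-group}). It also commutes with the abelian dilation action, so it preserves stabilizers. Hence it suffices to treat points whose extra frozen coordinates equal $1$, which are exactly the extensions $\widetilde p$ of points $p\in\var(\seed_1)$. For these, Lemma~\ref{lem:creating-deep-points-frozens} transfers the conclusion just as in Step 2.

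\textbf{Main obstacle.} I expect the hard part to be the rescaling in Step 3. One must check that the rescaling really preserves all cluster variables up to scalars in every seed, not only in the initial one; this should follow from the Laurent phenomenon and homogeneity of the mutation formulas. One must also verify that the really-full-rank condition gives exactly the needed solvability over $\C^\times$, where taking roots is always possible.
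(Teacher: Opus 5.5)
Your argument is correct, and it takes a different route from the paper. The paper gives no proof of its own here; it simply cites \cite[Corollary 3.19]{deep}. Judging from the paper's later appeals to \cite[Propositions 3.15--3.18]{deep}, that result presumably rests on invariance of mysterious points under quasi-cluster isomorphisms. There, really full rank is what lets one absorb the extra frozen rows by Laurent monomials and split them off as a torus factor.

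Your Step 2 is the acyclic form of the ``deleting frozens'' direction, the one behind Proposition~\ref{prop:adding-frozen-variables}. Your Step 3 replaces the quasi-cluster isomorphism by moving points along orbits of $\dil(\seed_3)$. Three comments on Step 3:

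\begin{enumerate}
\item The rescaling $s$ must be allowed nontrivial components on the \emph{mutable} variables, not only on the $\seed_1$-frozen ones, as your ``Concretely'' sentence says. Really full rank concerns all rows of $\exmatrix{B}(\seed_1)$. With frozen weights alone you would need the frozen block to have rank $n$, which fails e.g.\ for the coefficient-free seed of type $A_2$.
\item The obstacle you flag is already settled by Lemma~\ref{lem:cluster-dilation-group}. Your $s$ satisfies \eqref{eq:equations-dilation-group} for the initial seed of $\seed_3$, so it \emph{is} an element of $\dil(\seed_3)$. Hence it rescales every cluster variable of every seed and preserves each cluster torus. Since $\dil(\seed_3)$ is abelian, $\Stab_{\dil(\seed_3)}(s\cdot q)=\Stab_{\dil(\seed_3)}(q)$.
\item Over $\C^\times$ your Step 3 needs only full rank. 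The homomorphism $u\mapsto\bigl(\prod_i u_i^{b_{ik}}\bigr)_{k=1}^{n}$ defined by $\exmatrix{B}(\seed_1)=(b_{ik})$ has image a subtorus of dimension $\mathrm{rank}\,\exmatrix{B}(\seed_1)$. So the projection of $\dil(\seed_3)$ onto the extra frozen coordinates is surjective as soon as this rank is $n$. Really full rank is what an integral (monomial) rescaling, i.e.\ a quasi-cluster isomorphism, requires.
\end{enumerate}

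As for what each route buys: your argument shows the weaker rank hypothesis suffices and uses only tools already in the paper. However, it depends on Lemma~\ref{lem:creating-deep-points-frozens} and the presentation \eqref{eq:acyclic-cluster-variety}, hence on acyclicity, which is harmless here since $Q^{\mut}$ is a tree. The quasi-cluster isomorphism argument does not need acyclicity.
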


It is well-known, and easy to show, that any two orientations of a tree can be related by reversing arrows at sinks and sources. Note that mutation at a sink (resp. source) simply amounts to reversing the arrows incident to that sink (resp. source). Thus, to show Theorem \ref{thm:main-notintro} it is enough to assume that $Q$ satisfies the following:
\begin{itemize}
    \item $Q^{\mut}$ is a tree where every vertex is either a source or a sink; 
    \item there is exactly one frozen vertex $\overline{i}$ for each mutable vertex $i$ of $Q^{\mut}$; 
    \item there is just one arrow from each frozen vertex to a mutable vertex: we have an arrow $\boxed{\overline{i}} \to i$ if $i$ is a sink in $Q^{\mut}$, and an arrow $i \to \boxed{\overline{i}}$ if $i$ is a source in $Q^{\mut}$. (I.e., a mutable vertex is a sink (resp. source) of $Q$ if and only if it is a sink (resp. source) in $Q^{\mut}$.)
\end{itemize}

See Figure \ref{fig:quiver} for an example of such a quiver. 
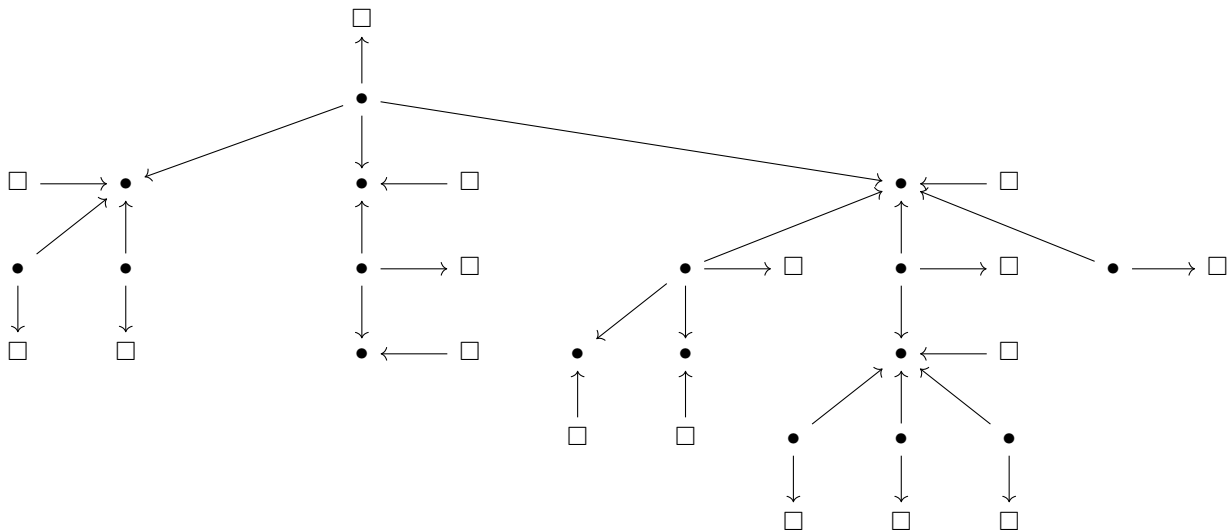
\begin{figure}
    \centering
    
    \[\begin{tikzcd}
	&&&& \square &&&&&&&& \\
	&&&& {\bullet} \\
	\square & \bullet &&& \bullet & \square &&&& \bullet & \square \\
	\bullet & \bullet &&& \bullet & \square && \bullet & \square & \bullet & \square & \bullet & \square \\
	\square & \square &&& \bullet & \square & \bullet & \bullet && \bullet & \square \\
	&&&&&& \square & \square & \bullet & \bullet & \bullet \\
	&&&&&&&& \square & \square & \square
	\arrow[from=2-5, to=1-5]
	\arrow[from=2-5, to=3-2]
	\arrow[from=2-5, to=3-5]
	\arrow[from=2-5, to=3-10]
	\arrow[from=3-1, to=3-2]
	\arrow[from=3-6, to=3-5]
	\arrow[from=3-11, to=3-10]
	\arrow[from=4-1, to=3-2]
	\arrow[from=4-1, to=5-1]
	\arrow[from=4-2, to=3-2]
	\arrow[from=4-2, to=5-2]
	\arrow[from=4-5, to=3-5]
	\arrow[from=4-5, to=4-6]
	\arrow[from=4-5, to=5-5]
	\arrow[from=4-8, to=3-10]
	\arrow[from=4-8, to=4-9]
	\arrow[from=4-8, to=5-7]
	\arrow[from=4-8, to=5-8]
	\arrow[from=4-10, to=3-10]
	\arrow[from=4-10, to=4-11]
	\arrow[from=4-10, to=5-10]
	\arrow[from=4-12, to=3-10]
	\arrow[from=4-12, to=4-13]
	\arrow[from=5-6, to=5-5]
	\arrow[from=5-11, to=5-10]
	\arrow[from=6-7, to=5-7]
	\arrow[from=6-8, to=5-8]
	\arrow[from=6-9, to=5-10]
	\arrow[from=6-9, to=7-9]
	\arrow[from=6-10, to=5-10]
	\arrow[from=6-10, to=7-10]
	\arrow[from=6-11, to=5-10]
	\arrow[from=6-11, to=7-11]
\end{tikzcd}\]

    \caption{An example of the sort of quivers we consider to prove Theorem~\ref{thm:main-notintro}. Note that every vertex is either a sink or a source. Frozen vertices are marked as boxes. }
    \label{fig:quiver}
\end{figure}

\subsection{Proof of Theorem \ref{thm:main-notintro}}
Now we proceed to prove Theorem \ref{thm:main-notintro}. We work by induction on the number $n$ of mutable vertices of $Q$, the base case $n = 1$ is clear. We make the reductions above and assume $Q$ has the form specified in Section \ref{sec:reductions}. In particular, $Q$ has $n$ mutable vertices (labeled $1, \dots, n$) and $n$ frozen vertices as well (labeled $\overline{1}, \dots, \overline{n}$), and every vertex is a source or a sink.

We present the algebra $A(\seed)$ as in \eqref{eq:acyclic-cluster-algebra}. In particular, for $k = 1, \dots, n$ we let $x'_k$ be the new variable obtained after mutating $\seed$ in direction $k$, so that $A(\seed)$ is generated by $x_1,x'_1, \dots, x_n, x'_n, x_{\overline{1}}^{\pm1}, \dots, x_{\overline{n}}^{\pm 1}$.  The cluster dilation group is identified with the set of elements $t = (t_1, \dots, t_n, t_{\overline{1}}, \dots, t_{\overline{n}}) \in (\C^{\times})^{2n}$ satisfying:
\begin{equation}\label{eq:cluster-dilation-case-of-interest}
\prod_{j \; \text{a neighbor of $k$}} t_j = 1 \qquad 
 \text{for every mutable vertex} \; k.
 \end{equation}

Note that $t(x_{k}) = t_kx_k, t(x_k')= t_k^{-1}x_k'$ 
and $t(x_{\overline{k}}) = t_{\overline{k}}x_{\overline{k}}$ for $k = 1, \dots, n$. 
 
Let $p \in \var(\seed)$, and assume that $\Stab_{\dil(\seed)}(p)$ is trivial. We must find a seed $\seed' \in \seeds(\seed)$ such that $p \in T(\seed')$. We have that $p \in \mathcal{O}_I$ for some independent set $I$ of the mutable vertices of $Q$. We separate in two cases. \\

{\bf Case 1. $I$ is not a maximal independent set.} In this case, there exists a mutable vertex $k$ of $Q$ such that $x_k(p) \neq 0$, and $x_j(p) \neq 0$ for every neighbor $j$ of $k$---indeed, any vertex $k\notin I$ such that $I\cup\{k\}$ is independent works. By \cite[Lemma 3.4]{muller}, the localization $A(\seed)[x_k^{-1}]$ is the cluster algebra of the seed $\widetilde{\seed}$ obtained from $\seed$ by freezing the $k$-th variable. So we may think of $p$ as being an element of $\var(\widetilde{\seed})$. Note that we have $\dil(\seed) \subseteq \dil(\widetilde{\seed}) \subseteq (\C^{\times})^{2n}$, and $\dil(\seed)$ is obtained from $\dil(\widetilde{\seed})$ by imposing the relation \eqref{eq:cluster-dilation-case-of-interest} at the vertex $k$. However, since $x_j(p) \neq 0$ for every neighbor $j$ of $k$, we have that $t_j = 1$ if $j$ is a neighbor of $k$ and $t \in \Stab_{\dil(\widetilde{\seed})}(p)$, so that relation \eqref{eq:cluster-dilation-case-of-interest} at vertex $k$ is already valid in $\Stab_{\dil(\widetilde{\seed})}(p)$. Thus, this stabilizer is trivial, and we may use induction to conclude that $p$ belongs to a cluster torus of $\var(\widetilde{\seed})$, that is also a cluster torus of $\var(\seed)$.  \\

{\bf Case 2. $I$ is a maximal independent set.} 
We may assume that $x'_i(p) = 0$ for every $i \in I$, as otherwise we may apply $\mu_i$ and freeze vertex $i$ ($\mu_i(Q)$ is a tree since $i$ is either a sink or a source; cf. Case~1).
We separate into several cases.  \\

{\bf Case 2.1. There exists a leaf of $Q^{\mut}$ that does not belong to $I$}. Let $j$ be this leaf. Note that its unique neighbor $i$ in $Q^{\mut}$ must belong to $I$ as $I$ is maximal. 
We perform one mutation at $j$ followed by one mutation at $i$. Mutating at $j$ we have
\[
x'_{j}(p) = \frac{1 + x_{\overline{j}}(p)x_{i}(p)}{x_{j}(p)} = \frac{1}{x_{j}(p)} \neq 0. 
\]
Note that $i$ stops being a sink or a source. Performing a mutation at $i$ we obtain a new cluster variable
\[
 x_{i}'' = \frac{x'_{j} + M}{x_{i}} = \frac{\frac{1 + x_{\overline{j}}x_{i}}{x_{j}} + M}{x_{i}} = \frac{1 + x_{\overline{j}}x_{i} + x_{j}M}{x_{j}x_{i}} = \frac{x_{i}' + x_{\overline{j}}}{x_{j}} 
\]
where $M$ is the product of all $x_{k}$ where $k$ runs over the neighbors different from $j$ of $i$ in $Q$. We see that $x_{i}''(p) = \frac{x_{\overline{j}}(p)}{x_{j}(p)} \neq 0$. 
The neighbors of $i$ in $\mu_{i}(\mu_{j}(Q))$ are the same as its neighbors in $Q$, and we have just shown that the corresponding cluster variables in the seed $\mu_{i}(\mu_{j}(\seed))$ do not vanish. Moreover, after freezing $i$ we obtain a quiver whose mutable part is a tree with $n-1$ mutable vertices. Thus we may apply induction as in Case 1. \\

{\bf Case 2.2. All leaves of $Q^{\mut}$ belong to $I$.} We again separate in two subcases. \\

{\bf Case 2.2.1. The set $I$ consists of all sink (resp. source) vertices of $Q^{\mut}$. } 
Assume without loss of generality that $I$ consists of all the sinks. Thus, $x_i(p) = 0$ for every mutable sink $i$ and $x_j(p) \neq 0$ for every mutable source $p$. We claim that:

\begin{framed}
\begin{center}
There is a nontrivial element in $\Stab_{\dil(\seed)}(p)$.
\end{center}
\end{framed}

Note this claim contradicts our first assumption regarding $p$.
Let $t \in \Stab_{\dil(\seed)}(p)$. Note that $t_{\overline{i}} = 1$ for every frozen vertex $\overline{i}$, since $x_{\overline{i}}(p) \neq 0$. For the same reason, $t_{i} = 1$ for every source vertex $i$. 
Choose a source vertex $j$ (observe that $j \not \in I$). 
Now let $i_1, \dots, i_s$ be the mutable neighbors of $j$. Note that these all belong to $I$. By assumption $s > 1$ (otherwise $j \not \in I$ and a leaf). Consider any solution to the equation $t_{i_1}\cdots t_{i_s} = 1$ with not all $t_{i_k}$ being $1$. Note that we can propagate this solution in order to find values of $t_i$ for every sink $i$ satisfying \eqref{eq:cluster-dilation-case-of-interest} at every mutable source $k$: since there are no source leaves, any such equation will involve two sinks which are at different distances from~$j$, and this guarantees that the solution can be propagated. Since not all $t_{i_k}$ are equal to $1$ and $x'_{i}(p) = 0$ for every $i \in I$, this gives a nontrivial element in $\Stab_{\dil(\seed)}(p)$, arriving at the desired contradiction.

{\bf Case 2.2.2. There exists both a sink and a source vertex in $I$.} 
Because $I$ is maximal and the mutable part of our quiver is a connected tree, there must be some adjacent pair of vertices $j_0, j_1$ which are both not in $I$. 
Thus the set $S := \{(j_0,j_1) \mid j_i \not \in I, j_0 \leftarrow j_1\}$ is nonempty.
Fix a root vertex $r$ of $Q^{\mut}$, and choose a pair $(j_0,j_1)$ whose distance to the root $r$ is maximal.
Without loss of generality, say $j_0$ is further from $r$ than $j_1$.
Because $I$ is maximal, there is a vertex $i_0 \in I$ adjacent to $j_0$.

In fact, the only neighbors of $j_0$ are $i_0$ and $j_1$.
Suppose for contradiction that $i_1$ is also adjacent to~$j_0$.
If $i_1 \not \in I$ then  $(j_0,i_1) \in S$ and has larger distance from $r$ than $(j_0, j_1)$ (contradicting our choice of~${(j_0, j_1)}$).
If $i_1 \in I$ then $p$ has a nontrivial stabilizer: 
for $j \in Q^{\mut}$, set $t_j = 1$ if $j$ does not belong to the trees of descendants (away from $r$) of $i_0$ or $i_1$. 
Set $t_{i_0} = t_{i_1} = -1$, and propagate this down the trees of descendants of $i_0$ and $i_1$, similarly to Case 2.2.1, in order to find a nontrivial element in the  stabilizer of~$p$, a contradiction.  
Note that we can propagate precisely because of the maximality of the distance from $j_0$ to $r$: down the trees of descendants of $i_0$ or $i_1$, $I$ consists of all the sources.

Let $k_0, \dots, k_{t}$ the children of $i_0$. We have the following configuration:
\begin{center}
\begin{tikzcd}
 & & & k_0 \\ 
 j_1 \ar[r] & j_0 & i_0 \ar[l] \ar[dr] \ar[ur] & \vdots \\ 
 & & & k_t   
\end{tikzcd}
\end{center}
Now we perform $\mu_{j_0}$ followed by $\mu_{i_0}$. We see that $x'_{j_0}(p) \neq 0$, and the quiver $\mu_{j_0}(Q)^{\mut}$ near $j_0$ is:
\begin{center}
\begin{tikzcd}
 & & & k_0 \\ 
 j_1  & j_0 \ar[r] \ar[l]& i_0 \ar[dr] \ar[ur] & \vdots \\ 
 & & & k_t   
\end{tikzcd}
\end{center}
Next, mutating at $i_0$ we have:
\[
x''_{i_0} = \frac{x'_{j_0} + M}{x_{i_0}} = \frac{\frac{1 + x_{\overline{j_0}}x_{j_1}x_{i_0}}{x_{j_0}} + M}{x_{i_0}} = \frac{1 + x_{\overline{j_0}}x_{j_1}x_{i_0} + x_{j_0}M}{x_{i_0}x_{j_0}} = \frac{x_{i_0}' + x_{\overline{j_0}}x_{j_1}}{x_{j_0}}
\]
where $M = x_{\overline{i_0}}x_{k_0}\cdots x_{k_t}$. We obtain that $x''_{i_0}(p) = \frac{x_{\overline{j_0}}(p)x_{j_1}(p)}{x_{j_0}(p)} \neq 0$, since we are assuming that $j_1 \notin I$. The quiver $\mu_{i_0}(\mu_{j_0}(Q))^{\mut}$ locally around $j_0$ is:
\begin{center}
\begin{tikzcd}
 & & & k_0 \ar[dl] \\ 
 j_1  & j_0 \ar[l] \ar[urr] \ar[drr]& \ar[l] i_0 & \vdots \\ 
 & & & k_t \ar[ul]
\end{tikzcd}
\end{center}
The variables associated to the mutable neighbors of $i_0$ in the seed $\mu_{i_0}(\mu_{j_0}(\seed))$ are $x'_{j_0}$, $x_{k_0}, \dots, x_{k_t}$. As we have seen, $x'_{j_0}(p) \neq 0$, and by the independence of $I$ it is clear that $x_{k_0}(p), \dots, x_{k_t}(p) \neq 0$. Finally, the variable associated to $i_0$ is $x''_{i_0}$ and we have seen that $x''_{i_0}(p) \neq 0$, so we may freeze the vertex $i_0$ without changing the stabilizer. After freezing $i_0$, we obtain a quiver whose mutable part is a tree. So we may apply induction as in the Case 1. This finishes the proof of Theorem \ref{thm:main-notintro}. 
\hfill $\square$

\begin{corollary}
    Let $\seed = (\exmatrix{B}(Q), \excluster{x})$ be a seed such that $Q^{\mut}$ is a tree. Assume that the exchange matrix $\exmatrix{B}(Q^{\mut})$ has really full rank. Then $\deep(\seed) = \emptyset$.  
\end{corollary}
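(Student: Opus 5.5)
By Theorem \ref{thm:main-notintro}, $\deep(\seed) = \stab(\seed)$, since $\var(\seed)$ has no mysterious points. So it suffices to show that, under the really full rank hypothesis on $\exmatrix{B}(Q^{\mut})$, every point $p \in \var(\seed)$ has trivial stabilizer in $\dil(\seed)$. In fact I will show that $\dil(\seed)$ acts freely on all of $\var(\seed)$, with a bound that does not depend on the frozen part.

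First I describe $\dil(\seed)$ using Lemma \ref{lem:cluster-dilation-group} in the initial seed. Write $t = (t_{\mathrm{mut}}, t_{\mathrm{fr}})$. The equations \eqref{eq:equations-dilation-group} say that the character $t \mapsto \prod_i t_i^{b_{ik}}$ is trivial for every $k$.

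Now take $t \in \Stab_{\dil(\seed)}(p)$. The frozen coordinates of $p$ are nonzero, since frozen variables are invertible in $A(\seed)$. Hence $t_{\mathrm{fr}} = 1$. The equations then reduce to
\[
\prod_{i=1}^{n} t_i^{b_{ik}} = 1, \qquad k = 1, \dots, n,
\]
involving only the mutable rows, which form the matrix $B = \exmatrix{B}(Q^{\mut})$.

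Next I use really full rank of $B$: its rows span $\Z^n$. For each standard basis vector $e_\ell$, choose integers $c_i$ with $\sum_i c_i b_{ik} = \delta_{k\ell}$. The relations above say the homomorphism $\Z^n \to \C^\times$ given by $v \mapsto \prod_k \bigl(\prod_i t_i^{b_{ik}}\bigr)^{v_k}$ is trivial. I need a little care to phrase this so it yields $t_\ell = 1$; in matrix form it is the right statement. Consider $\phi\colon \Z^n \to \C^\times$, $\phi(u) = \prod_i t_i^{u_i}$. The equations say $\phi$ vanishes on the columns of $B$. Since $B$ is skew-symmetric, the columns are the negatives of the rows, so they also span $\Z^n$. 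Therefore $\phi$ is trivial, and every $t_i = 1$. Hence $t = 1$, so $\stab(\seed) = \emptyset$ and $\deep(\seed) = \emptyset$.

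The only real subtlety is this middle step. One must use the equations in the correct (column) direction and note that skew-symmetry makes the row and column spans agree. One must also make sure that triviality of $t_{\mathrm{fr}}$ really comes from nonvanishing of the frozen coordinates, so that the frozen rows can be dropped. Beyond that, the corollary is a direct consequence of Theorem \ref{thm:main-notintro}.
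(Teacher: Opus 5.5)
Your proof is correct, but it is organized differently from the paper's.

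The paper deletes all frozen variables to get a seed $\widetilde{\seed}$. It observes that really full rank of $\exmatrix{B}(Q^{\mut})$ makes $\dil(\widetilde{\seed})$ itself trivial, so Theorem \ref{thm:main-notintro} gives $\deep(\widetilde{\seed})=\emptyset$. It then transports this back to $\seed$ using the external result \cite[Corollary 3.7]{deep} about adding frozen variables.

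You instead apply Theorem \ref{thm:main-notintro} directly to $\seed$. This is legitimate, since the theorem is stated for an arbitrary frozen part. You then show $\stab(\seed)=\emptyset$ by hand in two steps:
\begin{enumerate}
\item Nonvanishing of the frozen coordinates forces $t_{\mathrm{fr}}=1$ for any stabilizing $t$. This is the same observation used in Lemma \ref{lem:creating-deep-points-frozens}(3).
\item The character $u\mapsto\prod_i t_i^{u_i}$ is then trivial on the columns of the skew-symmetric matrix $B$, and these columns span $\Z^n$.
\end{enumerate}

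Your route avoids the transfer result from \cite{deep}. It also makes explicit that $\dil(\seed)$ acts freely on all of $\var(\seed)$ for any frozen extension, even though $\dil(\seed)$ itself need not be trivial once frozen vertices are present. The paper's route is shorter on the page but outsources exactly this step.

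One cosmetic point: the tentative row-combination sentence before you introduce $\phi$ does not by itself yield $t_\ell=1$ and can simply be deleted. The $\phi$ argument is the right one.
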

\begin{proof}
Let $\widetilde{\seed}$ be the seed corresponding to deleting all frozen variables from $\seed$. By our assumption,~$\dil(\widetilde{\seed})$ is trivial, and by Theorem \ref{thm:main-notintro} this implies that $\deep(\widetilde{\seed}) = \emptyset$. The result follows from \cite[Corollary 3.7]{deep}. 
\end{proof}

\section{Deep points from rank 2 subalgebras}\label{sec:deep-rank-2}

The goal of this section is proving Proposition \ref{prop:so-may-deep} that, roughly speaking, produces deep points by looking at rank $2$ subalgebras of a given cluster algebra. We start with a study of deep points of rank~$2$ cluster algebras. The following result can be recovered from results in \cite{beyer-muller}, but we present a proof for completeness and to illustrate the techniques in this paper.


\begin{lemma}\label{lem:rank2}
Rank $2$ cluster algebras have no mysterious points.
\end{lemma}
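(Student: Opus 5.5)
Let $\seed$ have rank $2$, with mutable vertices $1,2$ and $b=|b_{12}|$. If $b=0$ the algebra is a product of two rank-$1$ cluster algebras (over the frozen Laurent ring), and the claim is immediate. If $b=1$ the quiver $Q^{\mut}$ is a tree, so Theorem~\ref{thm:main-notintro} applies. So assume $b\ge 2$.

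In this case $Q^{\mut}$ is acyclic, and every seed in $\seeds(\seed)$ again has an acyclic rank-$2$ mutable part with the same $b$. By Lemma~\ref{lem:can-choose-frozens} (whose proof in \cite{deep} does not use the tree hypothesis), together with Lemma~\ref{lem:creating-deep-points-frozens}, I will choose frozen variables conveniently. Concretely, take frozens $\overline1,\overline2$ with $\overline1\to 1$ and $2\to\overline 2$ (or the analogous choice), so that $\exmatrix B$ has really full rank. The cluster variables then form a sequence $(x_m)_{m\in\Z}$ with exchange relations $x_{m-1}x_{m+1}=x_m^{b}\,c_m+c'_m$, where $c_m,c'_m$ are frozen monomials. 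The clusters are the consecutive pairs $\{x_m,x_{m+1}\}$.

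Now let $p$ be deep. Then for every $m$, one of $x_m(p)$ and $x_{m+1}(p)$ vanishes. Two consecutive variables cannot both vanish, since the exchange relation at $x_m=x_{m+1}=0$ would force $c'_m=0$ (or $c_m=0$), which is impossible because frozen variables are invertible. Hence the zeros of $m\mapsto x_m(p)$ alternate. So $x_m(p)=0$ exactly for $m$ of one parity, and $x_m(p)\ne0$ for the other parity.

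Plug this into the exchange relations. For $m$ with $x_m(p)\neq 0$, the relation $0=x_{m-1}x_{m+1}=x_m^b c_m+c'_m$ gives $x_m(p)^b=-c'_m/c_m$ at $p$. This is the key constraint. Since $x_m(p)\neq 0$ and $x_{m\pm1}(p)=0$, I will try to build a nontrivial stabilizer. Take $t$ with $t_{\text{frozen}}=1$, $t_m=1$ on the nonvanishing variables, and $t_m=\zeta$ on the vanishing ones, where $\zeta$ is constrained only by the homogeneity equations \eqref{eq:equations-dilation-group}. In the initial seed, say $x_1(p)=0$ and $x_2(p)\ne0$. The equations read $t_2^{b}\cdot(\text{frozen})=(\text{frozen})$ at vertex $1$, and $t_1^{b}\cdot(\text{frozen})=(\text{frozen})$ at vertex $2$. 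With all frozen $t$'s and $t_2$ equal to $1$, the first holds automatically, and the second becomes $t_1^{b}=1$. Since $b\ge2$, any primitive $b$-th root of unity $t_1=\zeta\neq1$ gives an element of $\dil(\seed)$. It fixes $p$ because it only rescales coordinates that vanish at $p$: $x_1(p)=0$, and $x_1'=x_3$ is also zero at $p$.

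The main obstacle is making the reduction to a chosen frozen pattern rigorous without the tree hypothesis. If Lemma~\ref{lem:can-choose-frozens} is not available in that generality, I will instead argue directly with arbitrary frozens. The same computation works: set all frozen $t$'s equal to $1$ (forced anyway, since frozen coordinates are nonzero) and $t_2=1$. The vertex-$2$ equation becomes $t_1^{b}=1$, while the vertex-$1$ equation involves only $t_2$ and frozens and so holds. In both versions, the dilation acts only through the vanishing coordinates $x_1$ and $x_1'$, so it fixes $p$. Hence every deep point has a nontrivial stabilizer, and rank $2$ has no mysterious points.
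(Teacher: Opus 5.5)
Your argument is correct and is essentially the paper's proof. Both show that a point not covered by the tori of $\seed$, $\mu_1(\seed)$ and $\mu_2(\seed)$ has $x_i(p)=x_i'(p)=0$ at one vertex with the other initial variable nonzero, and then use the dilation with $t_i$ a primitive $b$-th root of unity and all other coordinates $1$; the paper phrases this contrapositively via the strata $\mathcal{O}_I$, and your parity argument does the job of the fact that $\{1,2\}$ is not independent. Your fallback computation with arbitrary frozen variables is valid, which shows the reduction via \cite[Corollary 3.19]{deep} is not actually needed.
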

\begin{proof}
By \cite[Corollary 3.19]{deep} it is enough to show the proposition assuming we have a quiver of the following form
\[\begin{tikzcd}
	{\boxed{\overline{1}}} && 1 && 2 && {\boxed{\overline{2}}}
	\arrow[from=1-3, to=1-1]
	\arrow["a", from=1-3, to=1-5]
	\arrow[from=1-7, to=1-5]
\end{tikzcd}\]
We will assume $a > 1$, the case $a = 1$ can be verified directly. 
The cluster dilation group is given by $\dil(\seed) = \{t = (t_1, t_2, t_{\overline{1}}, t_{\overline{2}}) \in (\C^{\times})^{4} \mid t_1^{a}t_{\overline{2}} = t_2^{a}t_{\overline{1}} = 1\}$. Let $p \in \var(\seed)$ have a trivial stabilizer under the action of $\dil(\seed)$. If $p \in \mathcal{O}_{\emptyset}$ then $p$ is already in the cluster torus defined by the seed $\seed$. If $p \in \mathcal{O}_{\{2\}}$ but $x_2'(p) \neq 0$, then $p$ is in the cluster torus defined by $\mu_2(\seed)$. If $p \in \mathcal{O}_{\{2\}}$ and $x_2'(p) = 0$, then $(1, \zeta, 1, 1) \in \Stab_{\dil(\seed)}(p)$ is a nontrivial element, where $\zeta$ is a primitive $a$-th root of unity, so this case cannot happen. The case where $p \in \mathcal{O}_{\{1\}}$ is analogous. In any case, $p$ belongs to a cluster torus.
\end{proof}

\begin{remark}
        Note that, in Lemma \ref{lem:rank2}, the cluster algebra $A(\seed)$ is of infinite cluster type if $a > 1$. However, the proof shows that the three cluster tori $T(\seed), T(\mu_1(\seed))$ and $T(\mu_2(\seed))$ are enough to cover the cluster manifold $\manifold(\seed)$. This phenomenon is \emph{not} true for $a = 1$, where we need all five tori to cover the cluster manifold. Let us briefly show this. Since the quiver $1 \to 2$ is already really full rank, for simplicity we will work without frozen variables, so that $\var(\seed) = \{(p_1, p_1', p_2, p_2') \mid p_1p_1'= 1 + p_2, p_2p_2' = 1 + p_1\}$. There are five seeds in $\seeds(\seed)$, whose clusters are given as follows:
        \[
        \excluster{x} = \{x_1, x_2\},\; \mu_1(\excluster{x}) = \{x_1', x_2\}, \;\mu_2(\excluster{x}) = \{x_1, x_2'\}, \; \mu_2\mu_1(\excluster{x}) = \{x_1', x\}, \; \mu_2\mu_1(\excluster{x}) = \{x, x_2'\}
        \]
        where $x = \frac{1 + x_1 + x_2}{x_1x_2} = x'_1x'_2 - 1$. For each of the five cluster tori, we exhibit a point that belongs to this cluster torus but does not belong to any of the other four cluster tori.
        \[
        \begin{array}{|c|c|c|}
        \hline 
        \text{Seed} & \text{Cluster} &  (p_1, p_1', p_2, p_2') \\
        \hline
        \seed & \{x_1, x_2\} & (-1, 0, -1, 0) \\
        \hline
        \mu_1(\seed) & \{x_1', x_2\} &  (0, -1, -1, -1) \\
        \hline
        \mu_2(\seed) & \{x_1, x'_2\} &  (-1, -1, 0, -1) \\
        \hline
        \mu_2\mu_1(\seed) & \{x_1', x_1'x_2' - 1\} & (-1, -1, 0, 0) \\
        \hline
        \mu_2\mu_1(\seed) & \{x'_1x'_2 - 1, x'_2\} & (0, 0, -1, -1) \\
        \hline
        \end{array}
        \]
        So we see that, in spite of \eqref{eq:acyclic-cluster-algebra} and \eqref{eq:acyclic-cluster-variety}, if the mutable part of $\seed$ is acyclic 
        it is in general not enough to take the tori of $\seed$ and all its neighbors to cover $\manifold(\seed)$. Moreover see \cite{dani}, where it is shown that at least $\frac{2^{n+2}+(-1)^{n-1}}{3} - \mathbf{1}_{\mathrm{odd}}(n)$ cluster tori are needed to cover a cluster manifold of type $A_n$, where $\mathbf{1}_{\mathrm{odd}}$ is the indicator function on the set of odd numbers. 
\end{remark}

\begin{lemma}[{\cite[Corollary 4.2]{beyer-muller}}]\label{lem:zeroes-sticky-rank-2}
Suppose $p$ is deep in a rank $2$ cluster algebra $\var$ with connected mutable part $1 \stackrel{a}{\rightarrow} 2$, and that $x_{1}(p)=0$ in some $\seed$. 
Then $x_{1}(p)=0$ and $x_{2}(p) \neq 0$ in every seed $\seed' \in \seeds(\seed)$. 
(Indeed, up to relabeling vertices $1$ and $2$, all deep points are of this form.)
\end{lemma}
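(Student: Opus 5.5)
We reduce to the explicit geometry of rank $2$. By Lemma~\ref{lem:creating-deep-points-frozens} and the reduction behind Lemma~\ref{lem:rank2}, it is enough to handle one convenient choice of frozen variables. Concretely, we work with the quiver $\boxed{\overline{1}} \leftarrow 1 \stackrel{a}{\to} 2 \leftarrow \boxed{\overline{2}}$. Lemma~\ref{lem:creating-deep-points-frozens}(2) transports the property ``some cluster variable vanishes in every seed'' between different frozen choices, so the statement for this model implies it in general.

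The first step is to identify the deep points. By the proof of Lemma~\ref{lem:rank2}, the three tori $T(\seed)$, $T(\mu_1\seed)$, $T(\mu_2\seed)$ cover $\manifold(\seed)$ when $a>1$. A deep point therefore lies in $\mathcal{O}_{\{1\}}$ with $x_1'(p)=0$, or in $\mathcal{O}_{\{2\}}$ with $x_2'(p)=0$. Note that $\mathcal{O}_\emptyset = T(\seed)$, and the strata are indexed by independent sets, so $x_1$ and $x_2$ cannot both vanish. When $a=1$ the cluster algebra is of finite type $A_2$, and one checks directly that there are no deep points once the rank is really full.

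Next, take the case $x_1(p)=x_1'(p)=0$ and $x_2(p)\neq 0$; the other case is symmetric. The key observation is the stabilizer element $t=(\zeta,1,1,1)$ from the proof of Lemma~\ref{lem:rank2}, with $\zeta$ a primitive $a$-th root of unity. It fixes $p$, since it only rescales $x_1$ and $x_1'$, which vanish at $p$. It also acts on every cluster variable by a scalar.

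Now follow the mutation sequence along the rank-$2$ exchange graph. The cluster variables are $\dots, z_{-1}, z_0=x_1, z_1=x_2, z_2=x_1', \dots$ (suitably indexed), and consecutive pairs form the clusters. We compute how $t$ scales each $z_k$. Since $t$ fixes $x_2$ and the frozen variables, and multiplies $x_1$ and $x_1'$ by $\zeta^{\pm1}$, the exchange relations $z_{k-1}z_{k+1}=z_k^{a}\cdot(\text{frozen})+\dots$ show inductively that the variables in the ``position of $x_2$'' are scaled by $1$. The variables in the ``position of $x_1$'' are scaled by $\zeta^{\pm1}\neq 1$. Because $t$ fixes $p$, any variable $z$ with $t(z)=\zeta^{\pm1}z$ must satisfy $z(p)=\zeta^{\pm1}z(p)$, hence $z(p)=0$. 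Every cluster contains exactly one variable in the $x_1$-position, and these are exactly the variables labelled $x_1$ in each seed. This proves $x_1(p)=0$ in every seed.

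Finally, we must show $x_2(p)\neq 0$ in every seed, and we expect this to be the main obstacle. We argue by induction along the chain. If $z_k(p)=0$ for an $x_1$-position variable, then the exchange relation for its neighbor gives $z_{k-1}z_{k+1}=\text{(monomial in } z_k)+\text{(frozen)}$. Evaluating at $p$, the right-hand side equals a nonzero frozen monomial. Hence both neighbors $z_{k\pm1}$, which are $x_2$-position variables, are nonzero at $p$. To make this work we must check that, in the chosen orientation, the frozen monomial sits on the side without $z_k$ in every seed. With one frozen vertex per mutable vertex this follows from the sign-coherence of the rank-$2$ $g$-vectors/extended exchange matrices. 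If that step is delicate, we instead use the explicit periodic-in-form exchange relations of rank $2$, namely $z_{k-1}z_{k+1} = z_k^{a} y + 1$ up to frozen monomials. The parenthetical claim then follows from the case analysis in the first step.
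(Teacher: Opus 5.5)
Your argument works, but it follows a different route from the paper's, and its opening reduction should be dropped rather than repaired. Lemma~\ref{lem:creating-deep-points-frozens}(2) only compares a point $p$ with the point $\widetilde{p}$ obtained by \emph{adding} frozen coordinates equal to $1$, and it only concerns membership in some cluster torus. It does not carry an arbitrary frozen set with arbitrary nonzero frozen values over to your model, and it says nothing about \emph{which} variable vanishes in each seed. The reduction used in Lemma~\ref{lem:rank2} is specific to the absence of mysterious points.

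None of your later steps needs the model, though.
\begin{itemize}
\item By Lemma~\ref{lem:cluster-dilation-group}, for any frozen variables the only defining equation of $\dil(\seed)$ involving $t_1$ is the one at vertex $2$. When all other coordinates equal $1$ it becomes $t_1^{a}=1$, so $(\zeta,1,\dots,1)\in\dil(\seed)$ in general.
\item The facts $x_2(p)\neq 0$ and $x_1'(p)=0$ follow from the independent-set stratification, which needs no rank hypothesis, together with deepness applied to the cluster $\{x_1',x_2\}$.
\item The ``main obstacle'' in your last step is not one. In every seed the relevant exchange binomial is $z_k^{a}M_1+M_2$ with $M_1,M_2$ monomials in frozen variables. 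This holds simply because $z_k$ is the only other mutable variable and appears in exactly one of the two monomials. Hence $M_2(p)\neq 0$, with no appeal to sign-coherence.
\end{itemize}

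The paper propagates seed by seed and uses deepness at every step. The separating edge gives $x_2(p)\neq 0$; the exchange relation at vertex $2$ makes the new vertex-$2$ variable nonzero; deepness then forces the new vertex-$1$ variable to vanish. This is elementary and uniform in $a$.

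You use deepness only once, to get $x_1(p)=x_1'(p)=0$. After that, the finite-order dilation $(\zeta,1,\dots,1)$ kills all vertex-$1$ variables at once, since it rescales each of them by $\zeta^{\pm1}$ and fixes every vertex-$2$ variable. This proves slightly more: any point with $x_1=x_1'=0$ in one seed has the stated vanishing pattern in every seed, and so is deep. That is exactly the combination the paper assembles later in Proposition~\ref{prop:so-may-deep}.

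The cost is the dependence on $\zeta\neq 1$, so $a=1$ needs a separate argument. There the statement is vacuous: the mutable part $1\to 2$ already has really full rank, so the corollary following Theorem~\ref{thm:main-notintro} gives $\deep(\seed)=\emptyset$.
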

\begin{proof}
By \cite[Corollary 5.4]{muller}, $x_{1}(p)$ and $x_{2}(p)$ cannot simultaneously vanish (the edge between them is a separating edge). 
The coefficient variables $x_{\bar{i}}(p)\neq0$, so the exchange relation for $2$ is $x'_{2}(p) = \frac{M + 0}{x_{2}(p)} \neq 0.$
As $p$ is deep, something must be $0$ after each mutation (otherwise $p$ is in a torus). The parenthetical statement follows immediately from the definition of deep points and the previous line. 
\end{proof}




We are now ready to show our main tool for constructing deep points.

\begin{proposition}\label{prop:so-may-deep}
For a seed $\seed  = (\exmatrix{B}, \exmatrix{x})$, suppose that for all $\seed' = (\exmatrix{B}', \exmatrix{z}) \in \seeds(\seed)$, we have $|b_{1,i}'| \geq 2$ for $i$ in $2, \dots, n$. 
Fix $p\in \var(\seed)$ such that $x_1(p) = x'_1(p)=0$ and $x_j(p) \neq 0$ for $j\neq1$. 
Then $p$ is deep. 
\end{proposition}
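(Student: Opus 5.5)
We argue by induction on the mutation distance from $\seed$, proving the stronger statement: for every $\seed' = (\exmatrix{B}', \excluster{z}) \in \seeds(\seed)$, the variable $z_1$ sitting at vertex $1$ satisfies $z_1(p) = 0$. This makes $p$ deep at once. The hypothesis that $|b'_{1i}| \geq 2$ for all $i$ and all seeds will serve as a uniform mechanism: for every $i \neq 1$, restricting to the rank $2$ subalgebra on vertices $\{1,i\}$ gives a connected rank $2$ cluster algebra with $a \geq 2$, in which Lemma \ref{lem:zeroes-sticky-rank-2} shows that zeros at vertex $1$ are ``sticky''.

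The base cases are $\seed$ and $\mu_1(\seed)$, where $x_1(p) = x'_1(p) = 0$ by assumption. For the inductive step, take a seed $\seed'$ with $z_1(p)=0$ and mutate at some $k$. If $k = 1$, we have to show that the new variable $z'_1$ also vanishes at $p$. If $k \neq 1$, the variable at vertex $1$ does not change, and we only need to maintain enough information to continue. To handle the case $k=1$ in general, we will strengthen the inductive hypothesis: the variable at vertex $1$ vanishes in $\seed'$ \emph{and} in $\mu_1(\seed')$, and all other variables in $\seed'$ are nonzero at $p$. This holds at $\seed$. If $\seed'$ satisfies it, then $\mu_1(\seed')$ satisfies it as well, since the roles of $z_1$ and $z'_1$ swap and the others are unchanged. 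For $\mu_k(\seed')$ with $k\neq1$, the exchange relation $z_k z_k' = M_+ + M_-$ has $b'_{1k}\neq 0$, so $z_1$ appears in exactly one monomial with exponent $\geq 2$. That monomial vanishes at $p$, while the other monomial consists of nonzero variables, so $z'_k(p)\neq 0$. It remains to show that the variable obtained by mutating $\mu_k(\seed')$ at $1$ still vanishes at $p$.

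This last point is the main obstacle, and we plan to handle it with the rank $2$ reduction. Freeze every mutable vertex except $1$ and $k$ in $\seed'$. Those variables are nonzero at $p$, so by \cite[Lemma 3.4]{muller} (iterated localization) $p$ lives in the rank $2$ cluster variety $\var_{1k}$ whose mutable part is $1 \stackrel{a}{\to} 2$ with $a = |b'_{1k}| \geq 2$, up to orientation. In $\var_{1k}$ the point $p$ satisfies $z_1(p) = z'_1(p) = 0$. Both $\seed'$ and $\mu_1(\seed')$ have a vanishing variable at $p$, and $\mu_k(\seed')$ does too because $z_1(p)=0$. Since the rank $2$ exchange graph is a line and $a\geq 2$ makes it infinite, we want $p$ to be deep in $\var_{1k}$. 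Then Lemma \ref{lem:zeroes-sticky-rank-2} gives that the vertex-$1$ variable vanishes in every seed of $\var_{1k}$, in particular in $\mu_1\mu_k(\seed')$, and this is a seed of $\var(\seed)$ after unfreezing. To prove deepness in $\var_{1k}$ cleanly, we would rather avoid it and check the rank $2$ claim directly. The dilation group of $\var_{1k}$ contains an element acting by a nontrivial $a$-th root of unity on vertex $k$'s partner direction, as in the proof of Lemma \ref{lem:rank2}. More precisely, with $z_1 = z'_1 = 0$, the element $(t_1,t_k)=(1,\zeta)$ with $\zeta^a=1$, $\zeta\neq 1$, together with trivial action on the now-frozen variables, is compatible with the equations \eqref{eq:equations-dilation-group} and fixes $p$. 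So $p$ has nontrivial stabilizer in $\var_{1k}$ and is therefore deep there.

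We then conclude via Lemma \ref{lem:zeroes-sticky-rank-2}. The one point to double-check is that the rank $2$ dilation equations at vertex $k$ involve only $t_1^{a}$ times frozen $t$'s, so that $\zeta$ at vertex $1$ actually works. We would adjust which coordinate carries $\zeta$ accordingly, following Lemma \ref{lem:rank2}.
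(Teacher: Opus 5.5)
Your argument is essentially the paper's proof. The strengthened hypothesis (the vertex-$1$ variable vanishes in both $\seed'$ and $\mu_1(\seed')$, and all other variables are nonzero) packages the paper's detour through $\seed''=\mu_1(\seed_{d-1})$. The case $k\neq 1$ uses the same exchange-relation argument. The case of mutating at $1$ right after $k$ uses the same freezing down to the rank~$2$ algebra on $\{1,k\}$ at the seed two steps back, followed by Lemma~\ref{lem:zeroes-sticky-rank-2}.

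The one thing to correct is the stabilizer element you display.
\begin{itemize}
\item The element $(t_1,t_k)=(1,\zeta)$ does lie in the rank~$2$ dilation group. But it rescales $z_k$, which is nonzero at $p$, so it does not fix $p$.
\item The right element is $t_1=\zeta$, $t_k=1$, with all frozen coordinates equal to $1$. This is the paper's $(\zeta,1,\dots,1)$, as your last paragraph suspects.
\item It satisfies \eqref{eq:equations-dilation-group}: the equation at $k$ involves only $t_1^{a}$ and frozen coordinates, and the equation at $1$ does not involve $t_1$.
\item It fixes $p$: among the generators $z_1,z'_1,z_k,z'_k$ of the rank~$2$ algebra, it rescales only $z_1$ and $z'_1$, by $\zeta$ and $\zeta^{-1}$. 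Both of these vanish at $p$.
\end{itemize}
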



\begin{proof}
If the rank $n=1$, then this is a brief computation. 
If $n=2$, 
then $(\zeta, 1, \ldots)$, where $\zeta$ is a $|b_{12}|$-root of unity, belongs to the stabilizer of $p$.
So we consider the case $n \geq 3$. 

Let $x_{\seed'i}$ denote the cluster variable associated to vertex $i$ in the seed $\seed'\in\seeds(\seed)$. 
We show for every seed that $x_{\seed'1}(p)=0$ and $x_{\seed'j}(p) \neq 0$ for $j \neq 1$ (thus $p \not \in T(\seed')$). 
We argue by induction on the number $d$ of mutations from our initial seed $\seed$. 

By construction, $p\not \in T(\seed)$ nor $T(\mu_1(\seed))$.
Further, $p \not \in T(\mu_i(\seed))$ as the new cluster variable satisfies: \[
x'_{i}(p) = \frac{(M_1 + M_2)(p)}{x_{i}(p)}
\]
where $M_1 + M_2$ is a binomial in the $x_{j}$'s for $j \neq i$. 
By assumption, $|b'_{1i}|\geq 2$ so we know that $x_{1}$ divides $M_1$ or $M_2$ (but not both), while the other monomial is either $1$ or a product of powers of the $x_{j}$'s. In any case, we get $x_{\seed'i}(p) \neq 0$.
So the claim holds for $d\leq1$.

Suppose we have the claim for distance $d$, let us show it for $d+1$. 
We have the following path between the seeds $\seed = \seed_0$ and $\seed' = \seed_{d+1}$:
\[\begin{tikzcd}
	{\seed_0} & {\seed_1} & \cdots & {\seed_{d-1}} & {\seed_{d}} & {\seed_{d+1}}
	\arrow["{i_1}", no head, from=1-1, to=1-2]
	\arrow["{i_2}", no head, from=1-2, to=1-3]
	\arrow["{i_{d-1}}", no head, from=1-3, to=1-4]
	\arrow["{i_{d}}", no head, from=1-4, to=1-5]
	\arrow["{i_{d+1}}", no head, from=1-5, to=1-6]
\end{tikzcd}\]
Assume $i = i_{d+1} \neq 1$. Then, we have to show that $x_{\seed' i}(p) \neq 0$. But this follows from the same argument given for $p \not \in T(\mu_i(\seed))$ above (replacing $x_i$ and $x_i'$ with $x_{\seed_{d}i}$ and $x_{\seed'i}$ respectively). 

It remains to treat the case where $i=i_{d+1} = 1$. We have to show that $x_{\seed' 1}(p) = 0$. We may assume that $i_{d} \neq 1$. We consider a different seed $\seed''$:
\[\begin{tikzcd}
	&&&& {\seed''} & \\
	{\seed_0} & {\seed_1} & \cdots & {\seed_{d-1}} & {\seed_{d}} & {\seed_{d+1}}
	\arrow["{i_1}", no head, from=2-1, to=2-2]
	\arrow["{i_2}", no head, from=2-2, to=2-3]
	\arrow["{i_{d-1}}", no head, from=2-3, to=2-4]
	\arrow["1", no head, from=2-4, to=1-5]
	\arrow["{i_{d}}", no head, from=2-4, to=2-5]
	\arrow["1", no head, from=2-5, to=2-6]
\end{tikzcd}\]
Note that, by the induction hypothesis, $x_{\seed''1}(p) = 0$. 
We now look at the seed $\seed_{d-1}$. 
Note that the rank~$2$ quiver obtained from $Q(\seed_{d-1})$ after freezing all the vertices $[n] - \{1,i_d\}$ is acyclic,  so we may think of~$p$ as being in $\var(\widetilde{\seed}_{d-1})$, where $\widetilde{\seed}_{d-1}$ is the seed obtained by freezing $[n] - \{1,i_d\}$. 
Let $|b_{1i}^{\seed_{d-1}}|$ denote the number of arrows between $1$ and $i$ in $\widetilde{\seed}_{d-1}$. 
Note that if $\zeta$ is a primitive $|b_{1i}^{\seed_{d-1}}|$-root of unity, $(\zeta, 1, \ldots, 1)$ belongs to the stabilizer of $p$ under the action of $\dil(\widetilde{\seed}_{d-1})$. By assumption we have $|b_{1i_d}^{\seed_{d-1}}| \geq 2$, so $p$ has nontrivial stabilizer and is deep.
By Lemma~\ref{lem:zeroes-sticky-rank-2}, it follows that $x_{\seed'1}(p) = 0$.
\end{proof}

\begin{remark}
Proposition~\ref{prop:so-may-deep} can be generalized to algebras where the $|b_{1i}|$ constraint is violated on a small (or well controlled) set. 
The proof shows that $p$ is not within the set of cluster tori connected to $\seed$ by mutations where $1$ remains connected to all other mutable vertices by at least $2$ arrows. 
\end{remark}

\begin{remark}
While the proof of Proposition~\ref{prop:so-may-deep} uses the existence of a stabilizer to establish that $p$ is deep in certain rank $2$ algebras, this is not necessary. 
It is possible (but more tedious) to show $p$ is deep directly from the exchange relations and our assumptions on $\seed$. 
\end{remark}

\section{Mysterious points}\label{sec:mysterious-points}

Proposition~\ref{prop:so-may-deep} allows us to show that many points are deep without any reference to their stabilizer under the action of the cluster dilation group. In this section, we take advantage of this in order to construct several examples of mysterious points in locally acyclic cluster algebras.  In particular, this implies that \cite[Conjecture 1.1]{deep} is false in general.

\begin{proof}[Proof of Theorem~\ref{thm:main2-intro}]
Recall that our quiver is
\[\begin{tikzcd}
	3 && 1 && 2
	\arrow["a", from=1-1, to=1-3] 
	\arrow["b"', from=1-5, to=1-3] 
\end{tikzcd}\]
With $\gcd(a,b)=1$ and $\min(a,b) \geq 2$.

The gcd vector $\vec d(\exmatrix{B})$ is the vector with $\vec d_i = \gcd(b_{i1}, b_{i2}, \ldots, b_{in})$. This vector is a mutation invariant \cite{SevenGCD}. 
Because $\vec d = (1,b,a)$, we must have that $b'_{13} \in a \mathbb Z$ and  $b'_{12} \in b \mathbb Z$ in every seed.
But because $\gcd (b'_{12}, b'_{13}) = 1$ neither can be $0$. 
Thus $\seed$ satisfies the hypothesis of Proposition~\ref{prop:so-may-deep}. 

We note that the locus $\{x_1 = x_1' = 0\}$ is nonempty, given by the equation $1 + x_2^bx_3^a = 0$. 
We claim that any point in this locus is mysterious. 
Any such point is deep by Proposition \ref{prop:so-may-deep}. As observed in Example \ref{ex:dilation-group}, any element $(t_1, t_2, t_3) \in \dil(\seed)$ already satisfies $t_1 = 1$, and this implies that any point in the locus $\{x_1 = x_1' = 0\}$ has trivial stabilizer. Thus, any such point is mysterious.
\end{proof}

\subsection{Keys} \label{subsec:keys}
The class of quivers described in Theorem~\ref{thm:main2-intro} are examples of a more general class of quivers with well understood mutation classes (Definition~\ref{def:keys} below). 
They were described precisely in \cite{Ervin}. 
We will not use the full description, so we state only the necessary definitions and a few corollaries, before giving examples.

\begin{definition}[{\cite[Definition~4.1]{Ervin}}]\label{def:keys}
A \emph{key} is a quiver $Q$ with two distinguished vertices $k,k'$ such that:
\begin{itemize}
    \item $|b_{kk'}| < 2$, and otherwise (for $i \neq j$ and $\{i,j\} \neq \{k,k'\}$) $|b_{ij}| \geq 2$, 
    \item $Q$ is acyclic,
    \item $b_{ki} > 0 \iff b_{k'i} > 0$ for all $i \not \in \{k,k'\}$.
\end{itemize}
\end{definition}

\begin{example}
In Theorem~\ref{thm:main2-intro}, we have a family of keys all with distinguished vertices $\{k,k'\} = \{1,3\}$.
\end{example}

Keys have the smallest number of arrows of any quiver they can be mutated into.
In fact, this is essentially true for each $|b_{ij}|$ (with $i,j$ fixed).

\begin{theorem}[{\cite[Lemmas~4.21 \& 4.23, cf. Corollary 4.24]{Ervin}}]\label{thm:keys-grow}
Let $Q$ be a key with distinguished vertices $\{k,k'\}$ and $\seed = (\exmatrix{B}, \exmatrix{x})$ a seed whose associated quiver has mutable part $Q$.
If $b_{kk'}(Q) = 0$ then $b'_{ij} \geq b_{ij}$ for every seed $(B', \mathbf{z}') \in \seeds(\seed)$.
(If $b_{kk'}(Q) = \pm 1$ then instead $b'_{ij} \geq \min(b_{ij}, b_{\sigma(i)\sigma(j)})$ where $\sigma$ is the transposition $(k k')$.)
\end{theorem}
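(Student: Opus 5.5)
We read the inequalities as statements about the absolute values $|b'_{ij}|$, that is, about arrow counts. The plan is to follow Warkentin's theory of \emph{forks}, which is also the strategy of \cite{Ervin}, and argue by induction along mutation sequences. We first isolate the quivers we expect to see in the mutation class of $Q$:
\begin{itemize}
\item the keys obtained from $Q$ by mutating at sources and sinks, which only reorient arrows and so preserve all $|b_{ij}|$, and
\item the quivers obtained by mutating such a key at a vertex that is neither a source nor a sink.
\end{itemize}
The central claim is that every quiver in the mutation class of $Q$ that is not a reoriented key is a \emph{fork}. A fork here means an abundant quiver (all $|b_{ij}|\ge 2$) with a distinguished \emph{point of return} $r$ such that the full subquivers on the in- and out-neighbours of $r$ are acyclic, and each arrow $i\to r\to j$ satisfies $|b_{ji}|>\max(|b_{ir}|,|b_{rj}|)$. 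This is the case the hypothesis $b_{kk'}(Q)=0$ is designed for. The condition that $k$ and $k'$ have the same in/out pattern towards every other vertex is what makes the new arrows created between $k$ and $k'$ by a first non-sink/source mutation add rather than cancel.

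The key steps run in this order.
\begin{enumerate}
\item Check directly that mutating a key at a non-source, non-sink vertex $r$ produces a fork with point of return $r$, in which every $|b_{ij}|$ is at least its value in the key. The only entry that could be small, $b_{kk'}$, goes from $0$ to $|b_{kr}b_{rk'}|\ge 4$ when $r\ne k,k'$. When $r\in\{k,k'\}$ the vertex $r$ is a source or sink on $\{k,k'\}$, so $b_{kk'}$ stays unchanged.
\item Invoke (or reprove by the usual computation $|b'_{ji}| = |b_{jr}b_{ri}| - |b_{ji}|$) Warkentin's lemma. Mutating a fork at any vertex other than its point of return yields a fork whose new point of return is the mutated vertex, and all arrow multiplicities weakly increase, in fact strictly on the relevant edges.
\item Mutating a fork at its point of return simply undoes the previous step.
\end{enumerate}
With these in hand, the exchange graph restricted to the mutation class is tree-like: every seed is reached from a reoriented key by a sequence along which the fork structure propagates. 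The multiplicities are monotone along any reduced such path, so by induction on the length of a reduced mutation path from $\seed$, every $|b'_{ij}|\ge |b_{ij}|$.

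For $b_{kk'}=\pm1$ the same argument applies, with one change. Mutation at $k$ or $k'$ can now act on the pair $\{k,k'\}$ by a ``swap'' (the $A_2$-type subquiver), exchanging the roles of $k$ and $k'$ among the other vertices. This is why the bound becomes $\min(b_{ij},b_{\sigma(i)\sigma(j)})$. We would track the set of keys reachable via the $A_2$ mutations on $\{k,k'\}$ and verify that each of them still has weights bounded below by that minimum before entering fork territory.

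We expect the main obstacle to be step (1) together with the bookkeeping among keys. We must show that the only mutations that do not produce a fork are the source/sink reorientations, plus the $\{k,k'\}$ swaps in the $\pm1$ case, and that these never lower any multiplicity. Verifying the fork inequality $|b_{ji}|>\max(|b_{ir}|,|b_{rj}|)$ for the newly created arrows through $r$ needs the abundance $|b_{ij}|\ge2$ and the same-orientation condition on $k,k'$. The first thing we would try is the explicit estimate $ab-c\ge\max(a,b)+1$ for $a,b\ge2$, $c\ge0$. This estimate holds when the triangle $i,r,j$ becomes acyclic with $c$ small relative to $ab$, which is what acyclicity of the key should ensure.
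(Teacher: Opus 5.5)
The paper does not prove this statement; it quotes it from \cite{Ervin}. On its own terms your outline has a genuine gap at its foundation. The claim that every quiver in the mutation class other than sink/source reorientations of the key is a fork is false. Step (1) fails precisely because of the key condition you invoke.

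For $r\neq k,k'$, the condition $b_{kr}>0\iff b_{k'r}>0$ puts $k$ and $k'$ on the \emph{same} side of $r$. So $b_{kr}b_{rk'}<0$, there is no path $k\to r\to k'$ or $k'\to r\to k$, and $\mu_r$ does not change $b_{kk'}$ at all. The same-orientation condition does not make new arrows ``add rather than cancel''; it prevents them from appearing. For $r\in\{k,k'\}$ you note yourself that $b_{kk'}$ is unchanged. Either way $|b_{kk'}|<2$ persists, so $\mu_r(Q)$ is not abundant and hence not a fork. When $r$ is not a source or sink, it is not acyclic either.

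Concretely, take vertices $k,k',r,s$ with arrows $k\to r$, $k'\to r$, $r\to s$, $k\to s$, $k'\to s$, all of multiplicity $\ge 2$, and $b_{kk'}=0$.
\begin{itemize}
\item This is a key, and $\mu_r(Q)$ contains the oriented cycle $r\to k\to s\to r$ while still $b_{kk'}=0$.
\item Mutating further at $k$ gives another non-acyclic quiver with $b_{kk'}=0$, containing the cycle $k\to r\to s\to k$.
\item In $\mu_k\mu_r(Q)$, deleting $k'$ leaves a fork with point of return $k$, but deleting $k$ leaves an acyclic quiver.
\end{itemize}
The jump $|b_{kk'}|\mapsto|b_{kr}b_{rk'}|\ge 4$ that you describe happens only when the mutated vertex separates $k$ from $k'$. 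Among the acyclic reorientations, that requires first reorienting at $k$ or $k'$, and those reorientations are no longer keys. For example, the key $3\xrightarrow{a}1\xleftarrow{b}2$, with distinguished pair $\{2,3\}$, becomes $3\xrightarrow{a}1\xrightarrow{b}2$ after mutating at the source $2$, and this is not a key.

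Consequently, Warkentin's monotonicity lemma (your step (2)) is unavailable along a large part of the mutation class. Your claimed tree-like structure and monotonicity of multiplicities are not established there.

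The missing ingredient is a description of this intermediate family of non-acyclic, non-abundant quivers in which $|b_{kk'}|$ stays below $2$. The natural guess is that both one-vertex deletions are forks with a common point of return. That guess holds for $\mu_r(Q)$ above but already fails for $\mu_k\mu_r(Q)$. You would need to:
\begin{itemize}
\item identify this family correctly;
\item show that along mutation paths inside it all $|b_{ij}|$ with $\{i,j\}\neq\{k,k'\}$ weakly increase;
\item control what happens when a mutation finally separates $k$ from $k'$, so that step (2) can take over.
\end{itemize}
This bookkeeping is what the cited lemmas of \cite{Ervin} supply.

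Two smaller points. Your treatment of the case $b_{kk'}=\pm1$ is only a plan. Your fallback estimate $ab-c\ge\max(a,b)+1$ is false without an a priori bound on $c$; for example $a=b=2$, $c=3$ gives $1<3$.
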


From Theorem~\ref{thm:keys-grow} and Proposition~\ref{prop:so-may-deep}, we have the following.

\begin{corollary}\label{cor:keys-deep}
Suppose the mutable part of $Q$ is a key with $1 \not \in \{k, k'\}$.
Then the locus $\{x_1=x_1'=0\}$ is nonempty and deep. 
\end{corollary}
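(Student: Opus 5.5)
The proof has two parts: deepness from Proposition~\ref{prop:so-may-deep}, whose hypothesis comes from Theorem~\ref{thm:keys-grow}, and nonemptiness from the explicit presentation \eqref{eq:acyclic-cluster-variety}.

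\emph{Hypothesis of Proposition~\ref{prop:so-may-deep}.} Since $1 \notin \{k,k'\}$, for every mutable $i \neq 1$ the pair $\{1,i\}$ is not $\{k,k'\}$. By the definition of a key, $|b_{1i}(Q)| \geq 2$. Theorem~\ref{thm:keys-grow} gives growth of the entries, which I read as growth of the absolute values $|b'_{ij}|$ (this is how \cite{Ervin} phrases it).

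If $b_{kk'} = 0$, then $|b'_{1i}| \geq |b_{1i}| \geq 2$ in every seed of $\seeds(\seed)$.

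If $b_{kk'} = \pm 1$, then $|b'_{1i}| \geq \min(|b_{1i}|, |b_{\sigma(1)\sigma(i)}|)$. Since $\sigma = (k\,k')$ fixes $1$, the second term is $|b_{1\sigma(i)}|$. This is still an entry between $1$ and some vertex other than $1$, and that pair is not $\{k,k'\}$, so it is at least $2$.

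Hence $|b'_{1i}| \geq 2$ for all $i \neq 1$ in every seed. Frozen vertices do not affect this, since the condition only concerns mutable indices.

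\emph{Nonemptiness.} The locus in question is the set of points with $x_1 = x_1' = 0$ and $x_j \neq 0$ for $j \neq 1$, as in Proposition~\ref{prop:so-may-deep}; I take this as the meaning of the statement. By \eqref{eq:acyclic-cluster-variety}, setting $p_1 = p_1' = 0$ leaves the constraint
\[
\prod_j p_j^{[b_{j1}]_+} + \prod_j p_j^{[-b_{j1}]_+} = 0,
\]
and for each $i \neq 1$ the constraint $p_i p_i' = (\text{binomial})$. Choose all $p_j$ with $j \neq 1$ nonzero, frozen values included, so that the first binomial vanishes. This is possible because the two monomials involve disjoint sets of variables, and at least one is nonconstant since $1$ has a mutable neighbor with $|b_{1i}| \geq 2$ (assuming rank at least $2$). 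Fix the remaining variables and solve for a single variable in the nonconstant monomial; over $\C$ its value can be taken nonzero.

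Then set $p_i' = (\text{binomial})/p_i$ for each $i \neq 1$, which is allowed since $p_i \neq 0$ for those $i$. This produces a point of $\var(\seed)$ in the locus. Proposition~\ref{prop:so-may-deep} then shows every point of the locus is deep.

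\emph{Main obstacle.} The delicate step is the translation of Theorem~\ref{thm:keys-grow} into the uniform bound $|b'_{1i}| \geq 2$. This requires checking that the transposition in the $b_{kk'} = \pm1$ case never pairs $1$ with a partner that yields the exceptional small entry. That holds exactly because $1$ is fixed by $\sigma$.
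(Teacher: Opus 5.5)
Your route is the paper's. Theorem~\ref{thm:keys-grow} supplies the hypothesis $|b'_{1i}|\ge 2$ of Proposition~\ref{prop:so-may-deep}, and your treatment of the $b_{kk'}=\pm1$ case via $\sigma(1)=1$ is correct and more explicit than the paper's. Nonemptiness comes from solving the exchange relation at $1$ with all other initial variables nonzero.

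There is, however, a gap in the deepness part. You declare that ``the locus'' means $\{x_1=x_1'=0,\ x_j\neq 0 \text{ for } j\neq 1\}$. The statement is about all of $\{x_1=x_1'=0\}$, and Proposition~\ref{prop:so-may-deep} applies only to points where every $x_j$, $j\neq 1$, is nonzero. As written, you have shown only that a sublocus is deep. Your ``main obstacle'' paragraph locates the difficulty in the wrong place.

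The missing step is that this nonvanishing holds automatically on $\{x_1=x_1'=0\}$. It is exactly where acyclicity of the key is used, and the paper handles it by citing \cite[Corollary 5.4]{muller}. Since $1\notin\{k,k'\}$, vertex $1$ is joined to every other mutable vertex. By Theorem~\ref{thm:def-mathcalO}, every point lies in some $\mathcal{O}_I$ with $I$ an independent set of $Q^{\mut}$. So $x_1(p)=0$ forces $I=\{1\}$, that is, $x_j(p)\neq 0$ for all mutable $j\neq 1$; frozen variables are invertible anyway.

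You can also see this directly. Suppose $x_u(p)=x_v(p)=0$ for an arrow $u\to v$. In the exchange relation at $v$, the left side vanishes and so does the monomial containing $x_u$. Hence the other monomial vanishes, so some mutable out-neighbor $l$ of $v$ has $x_l(p)=0$. Iterating produces an infinite directed walk in the finite acyclic quiver $Q^{\mut}$, which is a contradiction. With this step added, your argument proves the statement as written.
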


\begin{proof}
To see that this locus is nonempty, fix $x_2, \ldots x_{n-1} \neq 0$ and take a solution to the exchange relation 
\[0 = x_1 x_1' = \prod_{b_{i1} > 0
} x_i^{b_{i1}} + \prod_{b_{i1} < 0} x_i^{-b_{i1}}\] 
(as a polynomial in $x_n$).
In order to apply Proposition~\ref{prop:so-may-deep}, we need to know that every $x_i \neq 0$ for $i > 1$ on this locus. 
This follows from the fact that $b_{i1}\neq 0$ and $Q$ is acyclic (see \cite[Corollary 5.4]{muller}).
\end{proof}

\begin{example}
Consider the key 

\begin{center}
\begin{tikzcd}
    & 2 \ar[ddl, "a"'] \ar[ddr, "1"] & \\
    &&\\
    1 \ar[rr, "b"] & & 3
    
\end{tikzcd}
\end{center}
with $\gcd(a,b)=1$. We consider the locus $\{x_1 = x_1' = 0\}$, which is given by $x_2^{a}+x_3^{b} = 0$, $x_3 \neq 0, x_2 \neq 0$ (the nonzero conditions on $x_2, x_3$ come from the exchange relations $x_2x_2' = 1 + x_1^{a}x_3$ and $x_3x_3' = 1 + x_1^{b}x_2$).
Any point in this locus is deep by Corollary~\ref{cor:keys-deep}, and it is easy to see that it has a trivial stabilizer. 
Therefore every point in this locus is mysterious.
\end{example}

\begin{example}
Consider the key 

\begin{center}
\begin{tikzcd}
1 \ar[rr, "a"] \ar[dd, "d"'] \ar[ddrr,"e"] & & 2 \ar[dd, "b"] \\
& & \\
4 \ar[rr, "c"] & & 3
\end{tikzcd}
\end{center}
(thus $\min(a,b,c,d,e)\geq 2$). We consider the union of loci  $\{x_1 = x_1' = 0\} \bigsqcup\{x_3 = x_3' = 0\}$. 
Any point in this set is deep by Corollary~\ref{cor:keys-deep}, and it is easy to see that they each have a trivial stabilizer. 
Therefore every point in this set is mysterious.
\end{example}

\subsection{Forks}
The quivers from Definition~\ref{def:keys} are closely related to a family of quivers considered by Warkentin in \cite{warkentinThesis}. These give additional families of quivers with deep (and often mysterious) points. We again summarize the relevant results and give further examples.

\begin{definition}\label{def:abundant}
A quiver is \emph{abundant} if $|b_{ij}| \geq 2$ for all $i \neq j$.
\end{definition}

\begin{definition}[{\cite[Definition 2.1]{warkentinThesis}}]\label{def:fork}
A \emph{fork} $Q$ is an abundant quiver which is not acyclic, with a distinguished vertex $r$ (called the \emph{point of return}) such that:
\begin{enumerate}
    \item the full subquiver of $Q$ with vertices $j$ with $r \rightarrow j$ (resp. $j \rightarrow r$) forms an acyclic subquiver,
    \item for each oriented cycle through $r$, $i \rightarrow r \rightarrow j \rightarrow i$, we have $b_{ij} > \max(b_{ir}, b_{rj}).$
\end{enumerate}
\end{definition}

\begin{definition}
The \emph{fork-less part} of a mutation class is the subset of quivers which are not forks.
\end{definition}

\begin{proposition}[{\cite[Lemma 2.8, Corollary 4.5]{warkentinThesis}}]\label{prop:tree-lem}
If $Q$ is a fork with point of return $r$, then $\mu_j(Q)$ is a fork with point of return $j$ for $j \neq r$.
Thus the fork-less part of a mutation class is connected.
\end{proposition}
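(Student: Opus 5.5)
The statement has two parts. I would prove the first by a direct computation with the mutation rule for $\mu_j$. The second is a purely combinatorial consequence of the first, obtained by looking at paths in the exchange graph.

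\emph{First part.} Fix a fork $Q$ with point of return $r$, a vertex $j\neq r$, and put $Q'=\mu_j(Q)$. Replacing $Q$ by its opposite quiver if necessary (this preserves all the definitions), I may assume $r\to j$. Recall that $\mu_j$ only changes $b_{ik}$ when $i\to j\to k$, in which case $b'_{ik}=b_{ik}+b_{ij}b_{jk}$; arrows incident to $j$ are reversed.

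The first thing I would establish is a structural lemma: in a fork, every oriented cycle passes through $r$. Equivalently, $Q\setminus\{r\}$ is acyclic. I would try to prove this by taking a shortest oriented cycle avoiding $r$. Since $Q$ is abundant, every vertex of that cycle is adjacent to $r$. Condition (1) of Definition~\ref{def:fork} then forces the cycle to use both an out-neighbour and an in-neighbour of $r$. This produces a shorter oriented cycle through $r$, and condition (2) makes the resulting inequalities contradictory.

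Given this lemma, I would verify the fork axioms for $Q'$ with point of return $j$:
\begin{itemize}
\item Every path $i\to j\to k$ of $Q$ is composed with $b_{ij}b_{jk}\ge 4$. Since $Q\setminus\{r\}$ is acyclic and $j\neq r$, either $k\to i$ with $|b_{ki}|\ge 2$ (which only happens through $r$), or arrows add. In both cases $|b'_{ik}|\ge 2$, so $Q'$ is abundant.
\item The triangle $r\to j\to k\to r$, if it exists, yields a new oriented cycle, so $Q'$ is not acyclic.
\item The in- and out-neighbourhoods of $j$ in $Q'$ are the old out- and in-neighbourhoods of $j$. Mutation at $j$ does not alter arrows among vertices on the same side of $j$, so these full subquivers are unchanged. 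They avoid $j$, and they are acyclic by the lemma together with condition (1) at $r$.
\item For condition (2) at $j$, consider an oriented cycle $i\to j\to k\to i$ in $Q'$. In $Q$ this was a path $k\to j\to i$. Hence $b'_{ik}=b_{ik}+b_{kj}b_{ji}$ up to sign, and the product $b_{kj}b_{ji}$ strictly exceeds $\max(b_{kj},b_{ji})$ since both factors are at least $2$. Here one uses the abundance and the sign bookkeeping from the lemma to see that the old $b_{ik}$ does not cancel this.
\end{itemize}
I expect the structural lemma, and the sign bookkeeping in the case where $i$ or $k$ equals $r$, to be the main obstacle. There the old arrow between $i$ and $k$ may point the opposite way. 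I would first try to show that condition (2) at $r$ gives exactly the inequality needed to guarantee that the sum does not cancel.

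\emph{Second part.} Let $Q_0$ and $Q_N$ be non-forks in a mutation class, joined by a sequence of mutations $\mu_{a_N}\cdots\mu_{a_1}$. Since $\mu_a\mu_a=\mathrm{id}$, I can cancel consecutive repeated indices and assume the word is reduced. Suppose the path enters the forks, say $Q_{s-1}$ is a non-fork and $Q_s=\mu_{a_s}(Q_{s-1})$ is a fork. Then the point of return of $Q_s$ must be $a_s$: otherwise, by the first part, $\mu_{a_s}(Q_s)=Q_{s-1}$ would be a fork. Inductively, if $Q_t$ is a fork with point of return $a_t$ and $a_{t+1}\neq a_t$, then by the first part $Q_{t+1}$ is a fork with point of return $a_{t+1}$. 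Hence a reduced path never leaves the forks once it enters them. This contradicts the assumption that $Q_N$ is a non-fork, so the reduced path stays inside the fork-less part, which is therefore connected.
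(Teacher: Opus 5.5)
Your architecture matches Warkentin's, which the paper only cites: check the fork axioms for $\mu_j(Q)$ with point of return $j$ directly, then show that a reduced mutation word which enters the forks can never leave them. Your second part is correct as written, and it handles non-uniqueness of the point of return properly. The gap is the structural lemma that $Q\setminus\{r\}$ is acyclic. Your sign bookkeeping, the acyclicity of the new neighbourhoods, and the existence of the triangle in your second bullet all rest on it.

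With condition (2) as the paper words it, the lemma is false, so no inequality argument can prove it. That wording only constrains the pairs $i\to r\to j$ that happen to close up via $j\to i$. Take arrows $r\to a$, $r\to a'$, $b\to r$, $a\to a'$, $b\to a$ of multiplicity $2$, and $a'\to b$ of multiplicity $3$. This quiver is abundant and not acyclic. The neighbourhoods $\{a,a'\}$ and $\{b\}$ of $r$ are acyclic, and the only triangle $i\to r\to j\to i$ is $b\to r\to a'\to b$, where $3>2$. Yet $a\to a'\to b\to a$ avoids $r$. The proposition itself also fails here: in $\mu_a(Q)$ the $4$ new arrows $b\to a'$ cancel against the $3$ old arrows $a'\to b$, leaving a single arrow. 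So $\mu_a(Q)$ is not even abundant.

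The missing ingredient is Warkentin's actual condition (2): $b_{ji}>\max(b_{ir},b_{rj})$ for \emph{all} $i$ with $i\to r$ and \emph{all} $j$ with $r\to j$. In particular, every arrow between the in-neighbourhood $N$ and the out-neighbourhood $P$ of $r$ goes from $P$ to $N$.

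With this, your lemma is immediate and needs no shortest-cycle argument: $N$ and $P$ are acyclic and all arrows between them go one way. Also $N\neq\emptyset$ because $Q$ has a cycle, and if $r\to j$ then $j\to k$ for every $k\in N$, so your triangle always exists.

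Under $\mu_j$, the only cancellation occurs for the pairs $(r,k)$ with $k\in N$. Abundance alone does not control it, since $2\cdot 2-3=1$; this is the missing step in your first bullet. Write $\alpha=b_{rj}$, $\beta=b_{jk}$, $c=b_{kr}$. Condition (2) gives $\beta>c$ and $\beta>\alpha$, hence $b'_{rk}=\alpha\beta-c\ge 2\beta-c>\beta>\alpha$.

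Every other modified entry only grows. Arrows $P\to N$ add, and inside $P$ acyclicity rules out a back arrow. This gives abundance of $\mu_j(Q)$. It also gives condition (2) at $j$ for every pair with $x\to j\to y$ in $\mu_j(Q)$, as the correct definition requires, not just for the cycles that happen to exist.
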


Because forks are abundant, any mutation sequence that goes through forks will automatically satisfy the $|b_{1i}|\geq 2$ condition in Proposition~\ref{prop:so-may-deep}.
When the fork-less part is finite, then checking the conditions of Proposition~\ref{prop:so-may-deep} becomes a finite computation.
This is especially easy for abundant acyclic quivers.

\begin{proposition}[{\cite[Lemmas 2.5 and 2.8]{warkentinThesis}}]\label{def:abundance-for-acyclic}
Suppose $Q$ is an abundant acyclic quiver. Then every mutation equivalent quiver is abundant.
\end{proposition}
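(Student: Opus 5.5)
The plan is to prove a single-step statement: if $Q$ is abundant and acyclic, then every one-step mutation $\mu_k(Q)$ is again abundant and acyclic \emph{or} is a fork. We then propagate this along mutation sequences using Proposition~\ref{prop:tree-lem}. Abundance of everything mutation equivalent to $Q$ follows, since forks are abundant by definition and the mutation class is reached by such steps.

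\textbf{Step 1: the one-step computation.} Fix a mutable vertex $k$ of the abundant acyclic quiver $Q$. Mutation at $k$ only changes arrows between pairs $i,j$ with $i\to k\to j$. For such a pair, $b'_{ij}=b_{ij}+b_{ik}b_{kj}$. Since $Q$ is acyclic there is no arrow $j\to i$ closing a cycle, so $b_{ij}\ge 0$. Together with $b_{ik},b_{kj}\ge 2$, this gives
\[ b'_{ij}\ge 4>\max(b'_{ik},b'_{kj})=\max(b_{ki},b_{jk}).\]
All other pairs keep their multiplicities, so $\mu_k(Q)$ is abundant. If $k$ is a source or a sink, $\mu_k(Q)$ is acyclic again and nothing more is needed.

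\textbf{Step 2: the non-trivial case.} Otherwise $\mu_k(Q)$ has oriented cycles, and we claim it is a fork with point of return $k$. Every oriented cycle through $k$ in $\mu_k(Q)$ has the form $j\to k\to i\to j$, where $i\to k\to j$ in $Q$. By Step 1, $b'_{ij}$ strictly dominates both arrows through $k$, which gives condition (2) of Definition~\ref{def:fork}. For condition (1), note that the out-neighbourhood (resp.\ in-neighbourhood) of $k$ in $\mu_k(Q)$ is the in-neighbourhood (resp.\ out-neighbourhood) of $k$ in $Q$. Arrows among these vertices are unchanged by $\mu_k$, so these full subquivers are subquivers of the acyclic $Q$, hence acyclic.

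\textbf{Step 3: propagation.} Take any mutation sequence starting from $Q$. As long as we stay among acyclic quivers, Steps 1 and 2 show each step gives an abundant acyclic quiver or a fork. Once we reach a fork with point of return $r$, two things can happen. Mutating at $r$ returns us to the previous quiver, by involutivity: we may assume the path is reduced, or simply note that this previous quiver was already shown abundant. Mutating at $j\ne r$ gives a fork by Proposition~\ref{prop:tree-lem}. An induction on path length therefore shows that every quiver in the sequence is abundant.

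The main obstacle is Step 2. One has to be careful that the inequality in Definition~\ref{def:fork}(2) is stated with the correct sign conventions. One also has to handle the induction in Step 3 properly when a path re-enters a fork's point of return. For that, I would track, along a reduced path, the invariant ``the current quiver is abundant acyclic, or a fork whose point of return is the last mutated vertex''. I expect this is exactly the content of \cite[Lemmas 2.5 and 2.8]{warkentinThesis}.
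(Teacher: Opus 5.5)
Your argument is correct and is essentially the route the paper takes by citing \cite[Lemmas 2.5 and 2.8]{warkentinThesis}. Steps 1--2 take an abundant acyclic quiver to either an abundant acyclic quiver or a fork whose point of return is the mutated vertex. Step 3 is the induction along a reduced mutation sequence via Proposition~\ref{prop:tree-lem}, using the invariant you state at the end. The one slip is in Step 1: the chain $b'_{ij}\ge 4>\max(\cdot)$ fails as written (e.g.\ $b_{ik}=7$). Use instead $b'_{ij}\ge b_{ik}b_{kj}\ge 2\max(b_{ik},b_{kj})>\max(b_{ik},b_{kj})$, where the right-hand side is the multiplicities of the arrows $k\to i$ and $j\to k$ in $\mu_k(Q)$, not the (negative) signed entries $b'_{ik},b'_{kj}$.
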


We thus have the following corollary (whose proof is essentially identical to Corollary~\ref{cor:keys-deep}).

\begin{corollary}\label{cor:abundant-acyclic-deep}
If the mutable part of $Q$ is an abundant acyclic quiver,
then the locus $\{x_1=x_1'=0\}$ is nonempty and deep. 
\end{corollary}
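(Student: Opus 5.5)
The plan mirrors the proof of Corollary~\ref{cor:keys-deep}: first establish that the hypothesis of Proposition~\ref{prop:so-may-deep} holds, then show the locus $\{x_1=x_1'=0\}$ is nonempty and that every point on it has $x_j\neq 0$ for all $j\neq 1$.

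\textbf{Checking the arrow hypothesis.} By Proposition~\ref{def:abundance-for-acyclic}, every quiver mutation-equivalent to the mutable part of $Q$ is abundant. Hence for every seed $\seed'\in\seeds(\seed)$ we have $|b'_{1i}|\geq 2$ for all mutable $i\neq 1$. This is exactly the condition required in Proposition~\ref{prop:so-may-deep}. Frozen vertices do not affect it, since that condition only concerns mutable indices and mutation of the mutable part is independent of the frozen rows.

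\textbf{Nonemptiness.} Since $Q^{\mut}$ is abundant and $n\geq 2$, vertex $1$ has a mutable neighbor; pick one, say $n$ (the rank $1$ case being degenerate/trivial). Fix arbitrary nonzero values for $x_2,\dots,x_{n-1}$ and for the frozen variables. Then the right-hand side of the exchange relation
\[
x_1x_1' = \prod_{b_{i1}>0} x_i^{b_{i1}} + \prod_{b_{i1}<0} x_i^{-b_{i1}}
\]
is a binomial in $x_n$. In this binomial $x_n$ appears with positive exponent in exactly one of the two monomials, so it has a nonzero root $x_n$. Set $x_1=x_1'=0$. We can then solve the remaining exchange relations $x_jx_j'=\dots$ for $x_j'$ with $j\geq 2$, since each $x_j\neq 0$. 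By \eqref{eq:acyclic-cluster-variety} this gives a point of $\var(\seed)$.

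\textbf{Remaining coordinates nonzero.} The main point to verify is that on the whole locus $\{x_1=x_1'=0\}$ all other $x_j$ with $j\neq 1$ are nonzero, since Proposition~\ref{prop:so-may-deep} requires this of every point. Every mutable $j\neq 1$ is adjacent to $1$ because $Q^{\mut}$ is abundant, and $Q^{\mut}$ is acyclic. So \cite[Corollary 5.4]{muller} applies: adjacent vertices in an acyclic seed cannot have simultaneously vanishing cluster variables. Concretely, the exchange relation at $j$ has the form $x_jx_j' = x_1^{|b_{1j}|}M + M'$, where $M'$ is a monomial not involving $x_1$. If $x_1=0=x_j$ we would get $M'(p)=0$, and one would need to argue inductively (or invoke that corollary) that this is impossible. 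Frozen coordinates are nonzero automatically.

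Once these three ingredients are in place, Proposition~\ref{prop:so-may-deep} yields that every point of the locus is deep, which is the statement.
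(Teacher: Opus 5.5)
Your proposal is correct and is essentially the paper's proof, which runs the argument of Corollary~\ref{cor:keys-deep}: Proposition~\ref{def:abundance-for-acyclic} gives $|b'_{1i}|\geq 2$ in every seed, nonemptiness comes from solving the exchange binomial at $1$ for a neighbor's variable, and \cite[Corollary 5.4]{muller} gives $x_j\neq 0$ for $j\neq 1$, after which Proposition~\ref{prop:so-may-deep} applies. The nonvanishing step you left half-finished also follows directly from Theorem~\ref{thm:def-mathcalO}: the set of initial cluster variables vanishing at a point is independent, and abundance makes $1$ adjacent to every other mutable vertex.
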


\begin{example}
Consider the family of abundant acyclic quivers $Q$

\begin{center}
\begin{tikzcd}
4 \ar[rr, "a"] \ar[dd, "f"'] \ar[ddrr,"d", pos=0.3] & & 1 \ar[dd, "b"] \ar[ddll, "e", pos=0.3] \\
& & \\
3  & & 2 \ar[ll, "c"]
\end{tikzcd}
\end{center}
with $\min(a,b,c,d,e,f) \geq 2$. The locus $\{x_1 = x_1' = 0\}$ is given by $x_4^{a} + x_2^{b}x_3^{e} = 0$ and $x_2x_3x_4 \neq 0$, so it is nonempty. Any point in this locus is deep, and it is mysterious if $\gcd(a,b,e) = 1$. Note that there are choices of $a,b,c,d,e,f$ such that the quiver is really full rank and the locus $\{x_1 = x'_1 = 0\}$ consists entirely of mysterious points.
\end{example}

\begin{example}
Consider the abundant acyclic quiver $Q$

\begin{center}
\begin{tikzcd}
    & 2 \ar[ddl, "a"'] \ar[ddr, "c"] & \\
    &&\\
    1 \ar[rr, "b"] & & 3
\end{tikzcd}
\end{center}
with $\min(a,b,c) \geq 2$.
All points in the union of loci $\{x_1=x_1'=0\} \bigsqcup \{x_2=x_2'=0\} \bigsqcup \{x_3=x_3'=0\} \subset \var(\seed(Q))$ are deep by Corollary~\ref{cor:abundant-acyclic-deep}.
If $\gcd(a,b)=\gcd(a,c)=\gcd(b,c)=1$, then all of these points are also mysterious.
\end{example}

For quivers which are not mutation acyclic, it is more difficult to describe a point in the cluster variety $\var(\seed(Q))$. See \cite[Warning 5.5]{beyer-muller}. Nonetheless, if one can construct a point satisfying the equations in Proposition~\ref{prop:so-may-deep}, then we can show it is deep.

\begin{example}
Consider the abundant quiver $Q$

\begin{center}

\begin{tikzcd}
    & 2 \ar[ddl, "3"']& \\
    &&\\
    1 \ar[rr, "4"] & & 3 \ar[uul, "5"']
    
\end{tikzcd}
\end{center}
Then $Q$ is not mutation equivalent to an acyclic quiver (indeed, every quiver mutation equivalent to $Q$ is a fork).
By \cite[Lemma 5.3]{beyer-muller}, any deep point either sends exactly one cluster variable in each seed to $0$, or sends all of them to $0$.
So all (not constant $0$) points in the union of loci 
\[\{x_1=x_1'=0\} \bigsqcup \{x_2=x_2'=0\} \bigsqcup \{x_3=x_3'=0\} \subset \var(\seed(Q))\] are deep by Proposition~\ref{prop:so-may-deep}, and can quickly be shown to be mysterious.
\end{example}

\subsection{Locally acyclic}
We have seen several examples of acyclic quivers which have mysterious points.
So it is unsurprising that there are quivers which are not acyclic, but are locally-acyclic, which have mysterious points.

\begin{example}[{\cite[Example 12.30]{warkentinThesis}}]
Consider a quiver $Q$ of the form: 
\begin{center}
\begin{tikzcd}
    & 2 \ar[ddl, "a"'] & \\
    &4 \ar[u, "d", pos=0.4] \ar[dl, "e", pos=0.4] \ar[dr, "f"', pos=0.4] &\\
    1 \ar[rr, "b"] & & 3 \ar[uul, "c+ab"']
    
\end{tikzcd}
\end{center}
with $\min(a,b,c,d,e,f)\geq 2$.
This quiver is locally acyclic (and in fact Banff).
The subquiver supported by the vertices $\{1,2,3\}$ is mutation acyclic; indeed it is acyclic in $\mu_1(Q)$.
Thus we can specify a point in the cluster algebra $\var(\seed(Q))$ by specifying a nonzero value for $x_4$ and the cluster variables in each of $\seed(Q), \mu_1(\seed(Q)), \mu_2(\mu_1(\seed(Q))),$ and $\mu_3(\mu_1(\seed(Q)))$ (satisfying the exchange relations).
If we choose $x_1=0$ in both $\seed(Q)$ and $\mu_1(\seed(Q))$, then the other cluster variables will necessarily be nonzero, and by Proposition~\ref{prop:so-may-deep} these points will be deep. 
The same argument shows that all the points in 
$$\{x_1 = x'_1 = 0\} \bigsqcup \{ x_2 = x'_2 = 0\} \bigsqcup \{ x_3 =x'_3 = 0\}$$ are deep (where $x_i$ is the cluster variable in $\mu_1(\seed(Q))$ and $x_i'$ is the cluster variable in $\mu_i(\mu_1(\seed(Q)))$).

The quiver $Q$ can be shown locally acyclic in multiple ways. In particular, $\mu_1(Q)$ has a sink vertex at $2$ and the subquiver supported by the vertices $\{1,3,4\}$ is acyclic in $Q$. 
So the points in the cluster variety with $x_4=0$ in both $\seed(Q)$ and $\mu_4(\seed(Q))$ are also deep. 

Often these points have no stabilizer. For example, when the values $a,b,c,d,e,f$ are all pairwise coprime.
So this gives many examples of mysterious points.
\end{example}

Further examples of locally-acyclic quivers with finite fork-less part (which may consist of only abundant quivers) can be constructed via \cite[Theorem 4.27]{MCR}.

We also remind the reader of \cite[Propositions~3.15-16]{deep}, which state that having a mysterious point is a quasi-cluster isomorphism invariant.

\begin{proposition}\label{prop:adding-frozen-variables}
If $Q$ is a quiver so that its cluster algebra has mysterious points, then adding frozen variables to $Q$ will still produce a cluster algebra with mysterious points. 
\end{proposition}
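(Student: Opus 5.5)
The plan is to transport a mysterious point $p \in \var(\seed)$ to the enlarged seed $\widetilde{\seed}$ using the padding construction $p \mapsto \widetilde{p}$ of Lemma \ref{lem:creating-deep-points-frozens}. That lemma is stated only for acyclic mutable part, so in general I would replace it by the analogous statements for an arbitrary seed.

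\textbf{Step 1: constructing the point.} Let $\widetilde{\seed}$ be obtained from $\seed$ by adding $\ell$ frozen rows. Consider the specialization homomorphism
\[
\pi\colon A(\widetilde{\seed}) \to A(\seed), \qquad x_{\overline{j}} \mapsto 1 \ \text{for each new frozen variable}.
\]
It sends each cluster variable of each seed of $\widetilde{\seed}$ to the corresponding cluster variable of $\seed$. This holds because the exchange binomials of $\widetilde{\seed}$ specialize to those of $\seed$, and mutation of the principal part does not depend on the frozen rows. Composing $\pi$ with $p\colon A(\seed)\to\C$ gives a point $\widetilde{p}\in\var(\widetilde{\seed})$. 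By construction, $x_{\widetilde{\seed}'i}(\widetilde{p}) = x_{\seed' i}(p)$ for every mutable index $i$ and every pair of corresponding seeds $\seed'$, $\widetilde{\seed}'$, and the new frozen coordinates of $\widetilde{p}$ are all equal to $1$.

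\textbf{Step 2: deepness transfers.} Cluster tori are cut out by the nonvanishing of the extended cluster. The old frozen coordinates are units and the new ones equal $1$, so $\widetilde{p}$ lies in $T(\widetilde{\seed}')$ if and only if $p$ lies in $T(\seed')$. Since $p$ is deep, $\widetilde{p}$ is deep. This is part (2) of Lemma \ref{lem:creating-deep-points-frozens}, or \cite[Lemma 3.3.2]{FWZ}.

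\textbf{Step 3: stabilizers agree.} Let $\widetilde{t}\in\Stab_{\dil(\widetilde{\seed})}(\widetilde{p})$.
\begin{itemize}
\item Since the new frozen coordinates of $\widetilde{p}$ equal $1\neq 0$, the corresponding components of $\widetilde{t}$ are $1$.
\item Setting those components to $1$ in the equations \eqref{eq:equations-dilation-group} for $\widetilde{\seed}$ gives exactly the equations for $\seed$.
\end{itemize}
Hence $t\mapsto(t,1,\dots,1)$ is an isomorphism $\Stab_{\dil(\seed)}(p)\to\Stab_{\dil(\widetilde{\seed})}(\widetilde{p})$, which is trivial because $p$ is not in $\stab(\seed)$. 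Therefore $\widetilde{p}$ is mysterious.

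The main obstacle is justifying that $\pi$ is well defined and compatible with all cluster variables in the non-acyclic case, where no presentation like \eqref{eq:acyclic-cluster-algebra} is available. Two issues need care. First, the inverted frozen variables map to $1$, so there is no problem at units. Second, $\pi$ must land in $A(\seed)$ and respect the identification of seeds. I would handle this with separation of additions, or with the coefficient specialization of \cite[Lemma 3.3.2]{FWZ}, applied to the upper-level description. Alternatively, one can invoke \cite[Propositions~3.15--16]{deep}, viewing the freezing and unfreezing data as a quasi-cluster-type comparison.
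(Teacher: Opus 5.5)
Your proof is correct, but it takes a different route from the paper. The paper's proof is a single citation: the statement is the contrapositive of \cite[Proposition 3.18]{deep}. You instead reprove that input directly, by extending Lemma~\ref{lem:creating-deep-points-frozens} from acyclic seeds to arbitrary ones through coefficient specialization. Your route is self-contained and uses only the Laurent phenomenon and Lemma~\ref{lem:cluster-dilation-group}. It also exhibits the mysterious point explicitly, as $p$ padded with $1$'s in the new frozen coordinates. The citation is shorter but hides all of this.

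The step you single out as the main obstacle is routine.
By the Laurent phenomenon, $A(\widetilde{\seed})$ lies in $\C[x_1^{\pm1},\dots,x_{n+m+\ell}^{\pm1}]$. On that ring, setting the new frozen variables to $1$ is already a ring homomorphism, so $\pi$ is automatically well defined.
Then induct along a mutation sequence. Apply $\pi$ to $\widetilde{z}_k\widetilde{z}_k'=\widetilde{M}_1+\widetilde{M}_2$ and cancel the nonzero element $\pi(\widetilde{z}_k)=z_k$ in the domain $\C[x_1^{\pm1},\dots,x_{n+m}^{\pm1}]$; this gives $\pi(\widetilde{z}_k')=z_k'$. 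Hence $\pi$ lands in $A(\seed)$ and matches cluster variables seed by seed, so separation of additions is unnecessary.
The same specialization satisfies $\pi\circ(t,1,\dots,1)=t\circ\pi$. This shows that $(t,1,\dots,1)$ rescales each $\widetilde{z}$ by the same scalar by which $t$ rescales $z$. That is the fact Step~3 actually needs: the two stabilizer conditions coincide, not merely the two sets of defining equations.

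Drop your last fallback. \cite[Propositions~3.15--16]{deep} concern quasi-cluster isomorphisms, and $A(\widetilde{\seed})$ and $A(\seed)$ have fraction fields of different transcendence degree, so they cannot be related that way.
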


\begin{proof}
This is the contrapositive of \cite[Proposition 3.18]{deep}
\end{proof}

Thus, while some of the examples of cluster algebras with mysterious points we have produced may not have really full rank, we can always add frozen variables to produce example of cluster algebras with really full rank and mysterious points.

\subsection{Abundant without mysterious points}
Based on the previous examples, it seems that the property of having mysterious points is subtle and depends on number-theoretic properties of the multiplicities of arrows. In fact, we have the following result. 

\begin{proposition}\label{prop:3-vertices}
Consider the cluster algebra $A$ with exchange matrix associated to the quiver
\[\begin{tikzcd}
	{\boxed{\overline{3}}} && {\boxed{\overline{1}}} && {\boxed{\overline{2}}} \\
	3 && 1 && 2
	\arrow[from=1-3, to=2-3]
	\arrow[from=2-1, to=1-1]
	\arrow["a", from=2-1, to=2-3]
	\arrow[from=2-5, to=1-5]
	\arrow["b"', from=2-5, to=2-3]
\end{tikzcd}\]
\begin{enumerate}
    \item If either $\min(a,b) = 1$ or $\gcd(a,b) > 1$, then $A$ has no mysterious points.
    \item Else, $A$ has mysterious points. 
\end{enumerate}
\end{proposition}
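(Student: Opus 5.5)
We split according to the two parts of the statement. Part (2) follows from results already established. If $\min(a,b)\geq 2$ and $\gcd(a,b)=1$, then Theorem~\ref{thm:main2-intro} gives mysterious points for the quiver with frozen vertices deleted. Adding the frozen vertices $\overline{1},\overline{2},\overline{3}$ back preserves the existence of mysterious points by Proposition~\ref{prop:adding-frozen-variables}.

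For part (1), fix $p\in\var$ with $\Stab_{\dil}(p)$ trivial; we must put $p$ in a cluster torus. Vertex $1$ is a sink, and $2,3$ are sources both in $Q^{\mut}$ and in $Q$. The exchange relations are
\[
x_1x_1'=x_{\overline 1}+x_2^bx_3^a,\qquad x_2x_2'=1+x_1^bx_{\overline 2},\qquad x_3x_3'=1+x_1^ax_{\overline 3},
\]
and by \eqref{eq:equations-dilation-group} the dilation group is cut out by
\[
t_{\overline 1}=t_2^bt_3^a,\qquad t_{\overline 2}=t_1^b,\qquad t_{\overline 3}=t_1^a .
\]
We go through the Lam--Speyer strata $\mathcal O_I$ of Theorem~\ref{thm:def-mathcalO}, where $I\in\{\emptyset,\{1\},\{2\},\{3\},\{2,3\}\}$.
\begin{itemize}
\item $I=\emptyset$: $p\in T(\seed)$, and we are done.
\item $I=\{2\}$ or $I=\{3\}$: we argue exactly as in Case~1 of the proof of Theorem~\ref{thm:main-notintro}. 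Take $k=3$ (resp.\ $k=2$); the only neighbour of $k$ is $1$, and $x_1(p)\neq 0$. Freezing $k$ via \cite[Lemma 3.4]{muller} does not enlarge the stabilizer. We are then in rank $2$, where Lemma~\ref{lem:rank2} finishes.
\item $I=\{2,3\}$: if $x_2'(p)\neq 0$, mutate at the source $2$. The resulting quiver has the same shape with $2$ now a sink, and $p$ lies in a stratum $\{3\}$, which is handled above. The case $x_3'(p)\neq0$ is symmetric. If $x_2'(p)=x_3'(p)=0$, then every $(1,t_2,t_3,1,1,1)$ with $t_2^bt_3^a=1$ fixes $p$. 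This is a positive-dimensional group, contradicting the trivial stabilizer.
\end{itemize}

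The only real work is $I=\{1\}$. If $x_1'(p)\neq 0$, mutating at the sink $1$ places $p$ in the torus of $\mu_1(\seed)$. So assume $x_1(p)=x_1'(p)=0$ while $x_2(p),x_3(p)$ and the frozen values are nonzero. Then any stabilizing $t$ has all coordinates except $t_1$ equal to $1$, and the equations force $t_1^a=t_1^b=1$. Hence the stabilizer is the group of $\gcd(a,b)$-th roots of unity. If $\gcd(a,b)>1$ this is a contradiction, so it remains to treat $\min(a,b)=1$, say $a=1$ (the case $b=1$ is symmetric).

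Here we imitate Case~2.1 of the tree proof, using that $3$ is a leaf attached to $1$ by a single arrow.
\begin{enumerate}
\item Mutate at $3$. Since $x_1(p)=0$, we get $x_3'(p)=1/x_3(p)\neq 0$.
\item In $\mu_3(Q)$ there is now exactly one arrow between $1$ and $3$, so mutating at $1$ gives the binomial computation of Case~2.1:
\[
x_1''=\frac{x_1'+c}{x_3},
\]
with $c$ a frozen monomial. Thus $x_1''(p)=c(p)/x_3(p)\neq 0$.
\item The neighbours of $1$ in $\mu_1\mu_3(\seed)$ carry the values $x_3'(p)$ and $x_2(p)$, both nonzero. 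Freezing $1$ therefore keeps the stabilizer trivial and leaves a rank $2$ cluster algebra, where Lemma~\ref{lem:rank2} applies.
\end{enumerate}

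The main obstacle is step~(2). We must check that in $\mu_3(Q)$ the arrow between $3$ and $1$ is single and reversed, and that any new arrows created between $3$ and the frozen vertex $\overline 1$ still make the exchange polynomial at $1$ collapse to $x_1'$ plus a frozen monomial. Note also that the $b$ arrows from $2$ to $1$ are untouched by this mutation sequence, so the final rank $2$ quiver may have multiple arrows; this is why we appeal to Lemma~\ref{lem:rank2} rather than to Theorem~\ref{thm:main-notintro}.
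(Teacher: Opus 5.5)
Your proof is correct and follows the paper's argument essentially step for step. It uses the same analysis over the strata $\emptyset,\{1\},\{2\},\{3\},\{2,3\}$, the same $\gcd(a,b)$-th roots of unity in $t_1$, the same $\mu_1\mu_3$ computation when $a=1$, and deduces part (2) from Theorem~\ref{thm:main2-intro}; your explicit appeal to Proposition~\ref{prop:adding-frozen-variables} to restore the frozen vertices is a welcome step the paper leaves implicit.

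There are two harmless slips. First, since the arrow is $\overline{1}\to 1$, the relation is $x_1x_1'=1+x_{\overline 1}x_2^bx_3^a$, so $c=x_{\overline 3}$. The check you flag is then immediate, because $3$ is a source in $Q$ and $\mu_3$ creates no new arrows. Second, your step (3) is unnecessary: $x_1''(p)$, $x_2(p)$ and $x_3'(p)$ are all nonzero, so $p\in T(\mu_1\mu_3(\seed))$ directly, which is how the paper concludes.
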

\begin{proof}
We assume that $a$ and $b$ are not both equal to $1$; otherwise the result follows from Theorem \ref{thm:main-notintro}. The cluster dilation group is
\[
\dil(\seed) = \{t = (t_1, t_2, t_3, t_{\overline{1}}, t_{\overline{2}}, t_{\overline{3}}) \in  (\C^{\times})^{6} \mid t_1^{a}t_{\overline{3}} = t_3^{a}t_2^{b}t_{\overline{1}} = t_{1}^{b}t_{\overline{2}} = 1\}.
\]
Let $p \in \var$ have trivial stabilizer in $\dil(\seed)$. If $p \in \mathcal{O}_{\emptyset} = T(\seed)$ we are done. If $p \in \mathcal{O}_{\{3\}}$ then $x_1(p)x_2(p)\neq 0$, so we may freeze vertex $2$ in order to reduce to the case of Lemma \ref{lem:rank2}. Similarly if $p \in \mathcal{O}_{\{1\}}$. 

It remains to treat the case of the maximal independent sets $\{2,3\}$ and $\{1\}$. If $p \in \mathcal{O}_{\{2,3\}}$ but either $x'_2(p)$ or $x_3'(p) \neq 0$, we mutate at the corresponding vertex to reduce to the case of the previous paragraph. So we assume $x_2(p) = x'_2(p) = x_3(p) = x_3'(p) = 0$. If, say, $a > 1$, this implies that $(\zeta, 1, 1, 1, 1, 1) \in \Stab_{\dil(\seed)}(p)$, where $\zeta$ is a primitive $a$-th root of unity. So this case cannot happen.

Now assume that $p \in \mathcal{O}_{\{1\}}$. We may also assume that $x'_1(p) = 0$. If $\gcd(a,b) > 1$, we have $(1, \xi, 1, 1, 1, 1) \in \Stab_{\dil(\seed)}(p)$, where $\xi$ is a primitive $\gcd(a,b)$-root of unity, a contradiction. If $\min\{a,b\} = 1$, say $a = 1$, then we apply a strategy similar to that of the proof of Case 2.1 of Theorem \ref{thm:main-notintro}, and we see that $p \in T(\mu_1\mu_3(\seed))$, indeed
\[
x''_1 = \frac{x'_3 + x_2^{b}x_{\overline{1}}}{x_1} = 
\frac{\frac{1 + x_{\overline{3}}x_1}{x_3}+ x_2^{b}x_{\overline{1}}}{x_1} = 
\frac{1 + x_{\overline{3}}x_1 + x_{3}x_2^{b}x_{\overline{1}}}{x_1x_3} = 
\frac{x'_1 + x_{\overline{3}}}{x_3}
\]
so $x''_1(p) \neq 0$ and thus $p$ is not mysterious. We are done with (1). Statement (2) is Theorem \ref{thm:main2-intro}. 
\end{proof}

\bibliographystyle{plain}
\bibliography{bibliography.bib}
\end{document}